\newtheorem{theorem}{Theorem}[section]
\newtheorem{lemma}[theorem]{Lemma}
\newtheorem{proposition}[theorem]{Proposition}
\newtheorem{corollary}[theorem]{Corollary}
\newtheorem{definition}[theorem]{Definition}
\newtheorem{conjecture}[theorem]{Conjecture}
\newenvironment{remark}[1][Remark]{\begin{trivlist}
\item[\hskip \labelsep {\bfseries #1}]}{\end{trivlist}}
\newcommand{\bpi}[1]{\bar{\pi_{#1}}}
\newcommand{\barp}{\bar{p}}
\newcommand{\barx}{\bar{x}}
\newcommand{\bary}{\bar{y}}
\begin{document}

\author{Tom Denton}
\title{A Combinatorial Formula for Orthogonal Idempotents in the $0$-Hecke Algebra of the Symmetric Group}


\maketitle
\begin{abstract}

Building on the work of P.N. Norton, we give combinatorial formulae for two maximal decompositions of the identity into orthogonal idempotents in the $0$-Hecke algebra of the symmetric group, $\mathbb{C}H_0(S_N)$.  This construction is compatible with the branching from $S_{N-1}$ to $S_{N}$.


\end{abstract}

\section{Introduction}
\label{sec:in}

The $0$-Hecke algebra $\mathbb{C}H_0(S_N)$ for the symmetric group $S_N$ can be obtained as the Iwahori-Hecke algebra of the symmetric group $H_q(S_N)$ at $q=0$.  It can also be constructed as the algebra of the monoid generated by anti-sorting operators on permutations of $N$.

P. N. Norton described the full representation theory of $\mathbb{C}H_0(S_N)$ in \cite{norton79}:  In brief, there is a collection of $2^{N-1}$ simple representations indexed by subsets of the usual generating set for the symmetric group, in correspondence with collection of $2^{N-1}$ projective indecomposable modules.  Norton gave a construction for some elements generating these projective modules, but these elements were neither orthogonal nor idempotent.  While it was known that an orthogonal collection of idempotents to generate the indecomposable modules exists, there was no known formula for these elements.

Herein, we describe an explicit construction for two different families of orthogonal idempotents in $\mathbb{C}H_0(S_N)$, one for each of the two orientations of the Dynkin diagram for $S_N$.  The construction proceeds by creating a collection of $2^{N-1}$ \emph{demipotent} elements, which we call \emph{diagram demipotents}, each indexed by a copy of the Dynkin diagram with signs attached to each node.  These elements are demipotent in the sense that, for each element $X$, there exists some number $k\leq N-1$ such that $X^j$ is idempotent for all $j\geq k$.  The collection of idempotents thus obtained provides a maximal orthogonal decomposition of the identity.

An important feature of the $0$-Hecke algebra is that it is the monoid algebra of a $\mathcal{J}$-trivial monoid.  As a result, its representation theory is highly combinatorial.  This paper is part of an ongoing effort with Hivert, Schilling, and Thi\'ery \cite{Denton_Hivert_Schilling_Thiery.JTrivialMonoids} to characterize the representation theory of general $\mathcal{J}$-trivial monoids, continuing the work of \cite{norton79}, \cite{hivert09-2}, \cite{hivert09}.  This effort is part of a general trend to better understand the representation theory of finite semigroups.  See, for example, \cite{izhakian.2010.semigroupsrepresentationsoversemirings}, \cite{steinberg.2006.moebius}, \cite{steinberg.2008.moebiusii}, \cite{almeida_margolis_steinberg_volkov.2009}, \cite{pennell_putcha_renner.1997}, and for a general overview, \cite{ganyushkin09}.

The diagram demipotents obey a branching rule which compares well to the situation in \cite{okounkov96} in their `New Approach to the Representation Theory of the Symmetric Group.'  In their construction, the branching rule for $S_N$ is given primary importance, and yields a canonical basis for the irreducible modules for $S_N$ which pull back to bases for irreducible modules for $S_{N-M}$.  

Okounkov and Vershik further make extensive use of a maximal commutative algebra generated by the Jucys-Murphy elements.  In the $0$-Hecke algebra, their construction does not directly apply, because the deformation of Jucys-Murphy elements (which span a maximal commutative subalgebra of $\mathbb{C}S_N$) to the $0$-Hecke algebra no longer commute.  Instead, the idempotents obtained from the diagram demipotents play the role of the Jucys-Murphy elements, generating a commutative subalgebra of $\mathbb{C}H_0(S_N)$ and giving a natural decomposition into indecomposable modules, while the branching diagram describes the multiplicities of the irreducible modules.

The Okounkov-Vershik construction is well-known to extend to group algebras of general finite Coxeter groups (\cite{Ram97seminormalrepresentations}).  It remains to be seen whether our construction for orthogonal idempotents generalizes beyond type $A$.  However, the existence of a process for type $A$ gives hope that the Okounkov-Vershik process might extend to more general $0$-Hecke algebras of Coxeter groups.

Section~\ref{sec:bg} establishes notation and describes the relevant background necessary for the rest of the paper.  For further background information on the properties of the symmetric group, one can refer to the books of \cite{humphreys90} and \cite{stanley97}.
Section~\ref{sec:cand} gives the construction of the diagram demipotents.  
Section~\ref{sec:bra} describes the branching rule the diagram demipotents obey, and also establishes the Sibling Rivalry Lemma, which is useful in proving the main results, in Theorem~\ref{thm:main}.  Section~\ref{sec:nilp} establishes bounds on the power to which the diagram demipotents must be raised to obtain an idempotent.  
Finally, remaining questions are discussed in Section~\ref{sec:quest}.

\begin{remark}[Acknowledgements.]
\label{sec:ack}
This work was the result of an exploration suggested by Nicolas M. Thi\'ery; the notion of branching idempotents was suggested by Alain Lascoux.  Additionally, Florent Hivert gave useful insights into working with demipotents elements in an aperiodic monoid.  Thanks are also due to my advisor, Anne Schilling, as well as Chris Berg, Andrew Berget, Brant Jones, Steve Pon, and Qiang Wang for their helpful feedback.  This research was driven by computer exploration using the open-source mathematical software Sage, developed by ~\cite{sage} and its algebraic combinatorics features developed by the \cite{Sage-Combinat}, and in particular Daniel Bump and Mike Hansen who implemented the Iwahori-Hecke algebras.  For larger examples, the Semigroupe package developed by Jean-\'Eric Pin \cite{Semigroupe} was invaluable, saving perhaps weeks of computing time.   
\end{remark}

\section{Background and Notation}
\label{sec:bg}

Let $S_N$ be the symmetric group generated by the simple transpositions $s_i$ for $i\in I=\{1,\ldots, N-1\}$ which satisfy the following realtions:
\begin{itemize}
\item Reflection: $s_i^2=1$,
\item Commutation: $s_i s_j=s_j s_i$ for $|i-j|>1$,
\item Braid relation: $s_i s_{i+1} s_i=s_{i+1}s_i s_{i+1}$.
\end{itemize}

The relations between distinct generators are encoded in the \emph{Dynkin diagram} for $S_N$, which is a graph with one node for each generator $s_i$, and an edge between the pairs of nodes corresponding to generators $s_i$ and $s_{i+1}$ for each $i$.  Here, an edge encodes the braid relation, and generators whose nodes are not connected by an edge commute.  (See figure~\ref{fig:signedDiagram}.)

\begin{definition}
The {\bf $0$-Hecke monoid} $H_0(S_N)$ is generated by the collection $\pi_i$ for $i$ in the set $I=\{1, \ldots, N-1\}$ with relations:
\begin{itemize}
\item Idempotence: $\pi_i^2=\pi_i$,
\item Commutation: $\pi_i \pi_j=\pi_j \pi_i$ for $|i-j|>1$,
\item Braid Relation: $\pi_i \pi_{i+1} \pi_i=\pi_{i+1} \pi_i \pi_{i+1}$.
\end{itemize}
\end{definition}

The $0$-Hecke monoid can be realized combinatorially as the collection of anti-sorting operators on permutations of $N$.  For any permutation $\sigma$, $\pi_i\sigma=\sigma$ if $i+1$ comes before $i$ in the one-line notation for $\sigma$, and $\pi_i\sigma=s_i \sigma$ otherwise.  

Additionally, $\sigma\pi_i=\sigma s_i$ if the $i$th entry of $\sigma$ is less than the $i+1$th entry, and $\sigma\pi_i=\sigma$ otherwise.  (The left action of $\pi_i$ is on \emph{values}, and the right action is on \emph{positions}.)

\begin{definition}
The {\bf $0$-Hecke algebra} $\mathbb{C}H_0(S_N)$ is the monoid algebra of the $0$-Hecke monoid of the symmetric group.
\end{definition}

\begin{remark}[Words for $S_N$ and $H_0(S_N)$ Elements.]
The set $I=\{1,\ldots, N-1\}$ is called the \emph{index set} for the Dynkin diagram.  A \emph{word} is a sequence $(i_1, \ldots, i_k )$ of elements of the index set.  To any word $w$ we can associate a permutation $s_w=s_{i_1}\ldots s_{i_k}$ and an element of the $0$-Hecke monoid $\pi_w=\pi_{i_1}\cdots \pi_{i_k}$.  A word $w$ is \emph{reduced} if its length is minimal amongst words with permutation $s_w$.  The \emph{length} of a permutation $\sigma$ is equal to the length of a reduced word for $\sigma$.

Elements of the $0$-Hecke monoid are indexed by permutations:  Any reduced word $s=s_{i_1}\cdots s_{i_k}$ for a permutation $\sigma$ gives a reduced word in the $0$-Hecke monoid, $\pi_{i_1}\cdots \pi_{i_k}$.  Furthermore, given two reduced words $w$ and $v$ for a permutation $\sigma$, then $w$ is related to $v$ by a sequence of braid and commutation relations.  These relations still hold in the $0$-Hecke monoid, so $\pi_w=\pi_v$.  

From this, we can see that the $0$-Hecke monoid has $N!$ elements, and that the $0$-Hecke algebra has dimension $N!$ as a vector space.  Additionally, the length of a permutation is the same as the length of the associated $H_0(S_N)$ element.

We can obtain a \emph{parabolic subgroup} (resp. submonoid, subalgebra) by considering the object whose generators are indexed by a subset $J\subset I$, retaining the original relations.  The Dynkin diagram of the corresponding object is obtained by deleting the relevant nodes and connecting edges from the original Dynkin diagram.  Every parabolic subgroup of $S_N$ contains a unique longest element, being an element whose length is maximal amongst all elements of the subgroup.  We denote the longest element in the parabolic sub-monoid of $H_0(S_N)$ with generators indexed by $J\subset I$ by $w_J^+$, and use $\hat{J}$ to denote the complement of $J$ in $I$.  For example, in $H_0(S_8)$ with $J=\{1,2,6\}$, then $w_J^+=\pi_{1216}$, and $w_{\hat{J}}^+=\pi_{3453437}$.

\begin{definition}
An element $x$ of a monoid or algebra is \emph{demipotent} if there exists some $k$ such that $x^\omega := x^k=x^{k+1}$.  A monoid is \emph{aperiodic} if every element is demipotent.
\end{definition}

The $0$-Hecke monoid is aperiodic.  Namely, for any element $x\in H_0(S_N)$, let: 
\[
J(x)=\{ i\in I \mid \text{ s.t. $i$ appears in some reduced word for $x$} \}.
\]
This set is well defined because if $i$ appears in some reduced word for $x$, then it appears in every reduced word for $x$.  Then $x^\omega=w_{J(x)}^+$.
\end{remark}

\begin{remark}[The Algebra Automorphism $\Psi$ of $\mathbb{C}H_0(S_N)$.]
$\mathbb{C}H_0(S_N)$ is alternatively generated as an algebra by elements $\bpi{i}:=(1-\pi_i)$, which satisfy the same relations as the $\pi_i$ generators.  There is a unique automorphism $\Psi$ of $\mathbb{C}H_0(S_N)$ defined by sending $\pi_i \rightarrow (1-\pi_i)$.

For any longest element $w_J^+$, the image $\Psi(w_J^+)$ is a longest element in the $(1-\pi_i)$ generators; this element is denoted $w_J^-$.
\end{remark}

\begin{remark}[The Dynkin diagram Automorphism of $\mathbb{C}H_0(S_N)$.]
A Dynkin diagram automorphism is a graph automorphism of the underlying graph.  For the Dynkin diagram of $S_N$, there is exactly one non-trivial automorphism, sending the node $i$ to $N-i$.

This diagram automorphism induces an automorphism of the symmetric group, sending the generator $s_i\rightarrow s_{N-i}$ and extending multiplicatively.  Similarly, there is an automorphism of the $0$-Hecke monoid sending the generator $\pi_i\rightarrow \pi_{N-i}$ and extending multiplicatively.
\end{remark}

\begin{remark}[Bruhat Order.]
The \emph{(left) weak order} on the set of permutations is defined by the relation $\sigma \leq_L \tau$ if there exist reduced words $v, w$ such that $\sigma=s_v, \tau=s_w$, and $v$ is a prefix of $w$ in the sense that $w=v_1,v_2, \ldots, v_j, w_j+1, \ldots, w_k$.  The right weak order is defined analogously, where $v$ must appear as a suffix of $w$.

The left weak order also exists on the set of $0$-Hecke monoid elements, with exactly the same definition.  Indeed, $s_v\leq_L s_w$ if and only if $\pi_v\leq_L \pi_w$.

For a permutation $\sigma$, we say that $i$ is a \emph{(left) descent} of $\sigma$ if $s_i\sigma \leq_L \sigma$.  We can define a descent in the same way for any element $\pi_w$ of the $0$-Hecke monoid.  We write $D_L(\sigma)$ and $D_L(\pi_w)$ for the set of all descents of $\sigma$ and $m$ respectively.  Right descents are defined analogously, and are denoted $D_R(\sigma)$ and $D_R(\pi_w)$, respectively.

It is well known that $i$ is a left descent of $\sigma$ if and only if there exists a reduced word $w$ for $\sigma$ with $w_1=i$.  As a consequence, if $D_L(\pi_w)=J$, then $w_J^+\pi_w=\pi_w$.  Likewise, $i$ is a right descent if and only if there exists a reduced word for $\sigma$ ending in $i$, and if $D_R(\pi_w)=J$, then $\pi_ww_J^+=\pi_w$.

\emph{Bruhat order} is defined by the relation $\sigma\leq \tau$ if there exist reduced words $v$ and $w$ such that $s_v=\sigma$ and $s_w=\tau$ and $v$ appears as a subword of $w$.  For example, $13$ appears as a subword of $123$, so $s_{12}\leq s_{123}$ in strong Bruhat order.
\end{remark}

\begin{remark}[Representation Theory]
The representation theory of $\mathbb{C}H_0(S_N)$ was described in \cite{norton79} and expanded to generic finite Coxeter groups in \cite{carter198689}.  A more general approach to the representation theory can be taken by approaching the $0$-Hecke algebra as a monoid algebra, as per  \cite{ganyushkin09}.  The main results are reproduced here for ease of reference.

For any subset $J\subset I$, let $\lambda_J$ denote the one-dimensional representation of $H$ defined by the action of the generators:
\begin{equation*}
    \lambda_J(\pi_i) = \begin{cases}
      0 & \text{if $i\in J$},\\
      1 & \text{if $i\not\in J.$}
    \end{cases}
\end{equation*}
The $\lambda_J$ are $2^{N-1}$ non-isomorphic representations, all one-dimensional and thus simple.  In fact, these are all of the simple representations of $\mathbb{C}H_0(S_N)$.

\begin{definition}
For each $i\in I$, define the \emph{evaluation maps} $\Phi_i^+$ and $\Phi_i^+$ on generators by:
\begin{eqnarray*}
\Phi_N^+ &:& \mathbb{C}H_0(W) \rightarrow \mathbb{C}H_0(W_{I\setminus \{i\}}) \\
\Phi_N^+(\pi_i) &=&     \begin{cases}
      1          & \text{if $i=N$,}\\
      \pi_i & \text{if $i \neq N$.}
    \end{cases}\\
\Phi_N^- &:& \mathbb{C}H_0(W) \rightarrow \mathbb{C}H_0(W_{I\setminus \{i\}}) \\
\Phi_N^-(\pi_i) &=&     \begin{cases}
      0          & \text{if $i=N$,}\\
      \pi_i & \text{if $i \neq N$.}
    \end{cases}
\end{eqnarray*}
\end{definition}
One can easily check that these maps extend to algebra morphisms from $H_0(W)\rightarrow H_0(W_{I\setminus i})$.  For any $J$, define $\Phi_J^+$ as the composition of the maps $\Phi_i^+$ for $i\in J$, and define $\Phi_J^-$ analogously.  Then the simple representations of $H_0(W)$ are given by the maps $\lambda_J = \Phi_J^+ \circ \Phi_{\hat{J}}^-$, where $\hat{J}=I\setminus J$.

The map $\Phi_J^+$ is also known as the parabolic map~\cite{Billey19991}, which sends an element $x$ to an element $y$ such that $y$ is the longest element less than $x$ in Bruhat order in the parabolic submonoid with generators indexed by $J$.

The nilpotent radical $\mathcal{N}$ in $\mathbb{C}H_0(S_N)$ is spanned by elements of the form $x-w_{J(x)}^+$, where $x$ is an element of the monoid $H_0(S_N)$, and $w_{J(x)}^+$ is the longest element in the parabolic submonoid whose generators are exactly the generators in any given reduced word for $x$.  This element $w_{J(x)}^+$ is idempotent.  If $y$ is already idempotent, then $y=w_{J(y)}^+$, and so $y-w_{J(y)}^+=0$ contributes nothing to $\mathcal{N}$.  However, all other elements $x-w_{J(x)}^+$ for $x$ not idempotent are linearly independent, and thus give a basis of $\mathcal{N}$.

Norton further showed that 
\begin{equation*}
\mathbb{C}H_0(S_N)=\bigoplus_{J\subset I} H_0(S_N)w_J^-w_{\hat{J}}^+
\end{equation*}
is a direct sum decomposition of $\mathbb{C}H_0(S_N)$ into indecomposable left ideals.

\begin{theorem}[Norton, 1979]
\label{thm:norton}
Let $\{p_J | J\subset I\}$ be a set of mutually orthogonal primitive idempotents with $p_J \in \mathbb{C}H_0(S_N)w_J^-w_{\hat{J}}^+$ for all $J\subset I$ such that $\sum_{J\subset I} p_J = 1$.  

Then $\mathbb{C}H_0(S_N)w_J^-w_{\hat{J}}^+=\mathbb{C}H_0(S_N)p_J$, and if $\mathcal{N}$ is the nilpotent radical of $\mathbb{C}H_0(S_N)$, $\mathcal{N}w_J^-w_{\hat{J}}^+=\mathcal{N}p_J$ is the unique maximal left ideal of $\mathbb{C}H_0(S_N)p_J$, and $\mathbb{C}H_0(S_N)p_J/\mathcal{N}p_J$ affords the representation $\lambda_J$.

Finally, the commutative algebra 
\[
\mathbb{C}H_0(S_N)/\mathcal{N} \stackrel{~}{=} \bigoplus_{J\subset I}\mathbb{C}H_0(S_N)p_J/\mathcal{N}p_J 
\stackrel{~}{=} \mathbb{C}^{2^{N-1}}.
\]
\end{theorem}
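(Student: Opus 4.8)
The plan is to derive the theorem from the general theory of idempotent decompositions in a finite-dimensional algebra, fed by the two facts quoted above: Norton's decomposition $\mathbb{C}H_0(S_N)=\bigoplus_{J\subseteq I}\mathbb{C}H_0(S_N)w_J^-w_{\hat J}^+$ into indecomposable projective left ideals, and the fact that the $\lambda_J$, $J\subseteq I$, are all $2^{N-1}$ of the (one-dimensional) simple modules. First I would observe that, since the $p_J$ are mutually orthogonal idempotents with $\sum_J p_J=1$, the regular module splits as $\mathbb{C}H_0(S_N)=\bigoplus_J\mathbb{C}H_0(S_N)p_J$, each summand projective and, by primitivity of $p_J$, indecomposable. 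The hypothesis $p_J\in\mathbb{C}H_0(S_N)w_J^-w_{\hat J}^+$ lets me write $p_J=x_Jw_J^-w_{\hat J}^+$ for some $x_J$, whence $\mathbb{C}H_0(S_N)p_J\subseteq\mathbb{C}H_0(S_N)w_J^-w_{\hat J}^+$ for every $J$. Summing dimensions, $\sum_J\dim\mathbb{C}H_0(S_N)p_J=\dim\mathbb{C}H_0(S_N)=\sum_J\dim\mathbb{C}H_0(S_N)w_J^-w_{\hat J}^+$ with each term on the left no larger than its counterpart on the right, so every inequality is an equality, and together with the containment this gives $\mathbb{C}H_0(S_N)w_J^-w_{\hat J}^+=\mathbb{C}H_0(S_N)p_J$.

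Next I would identify the radicals. Here I use that $\mathcal N$ is the Jacobson radical of $\mathbb{C}H_0(S_N)$: it is a nilpotent two-sided ideal, and the quotient $\mathbb{C}H_0(S_N)/\mathcal N$ is commutative because the images of the $\pi_i$ are pairwise commuting idempotents ($\pi_i\pi_{i+1}$ and $\pi_{i+1}\pi_i$ both reduce modulo $\mathcal N$ to $w_{\{i,i+1\}}^+$); since $\mathcal N$ has codimension $2^{N-1}$, the number of simple modules, $\mathcal N$ is exactly the Jacobson radical and $\mathbb{C}H_0(S_N)/\mathcal N$ is the semisimple algebra $\mathbb{C}^{2^{N-1}}$. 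Then for any left module $M$ one has $\operatorname{rad}M=\mathcal N M$; applying this to $M=\mathbb{C}H_0(S_N)p_J=\mathbb{C}H_0(S_N)w_J^-w_{\hat J}^+$ and using $\mathcal N\cdot\mathbb{C}H_0(S_N)=\mathcal N$ gives $\operatorname{rad}M=\mathcal N p_J=\mathcal N w_J^-w_{\hat J}^+$. Since $M$ is an indecomposable projective (equivalently $\operatorname{End}M\cong p_J\mathbb{C}H_0(S_N)p_J$ is local), $\operatorname{rad}M$ is the unique maximal submodule of $M$, and $M/\operatorname{rad}M\cong(\mathbb{C}H_0(S_N)/\mathcal N)\bar p_J$ is simple, hence one-dimensional.

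The remaining point — and the only genuinely combinatorial step — is to show this simple top affords $\lambda_J$ and not some other $\lambda_K$. Directly from the definitions, $w_J^-$ is a word in the generators $1-\pi_i$ in which the letters appearing are exactly the $i\in J$, and $w_{\hat J}^+$ is a word in the $\pi_i$ whose letters are exactly the $i\in\hat J$. Since $\lambda_K(1-\pi_i)=1$ iff $i\in K$ and $\lambda_K(\pi_i)=1$ iff $i\notin K$, multiplying over the letters gives $\lambda_K(w_J^-)=[\,J\subseteq K\,]$ and $\lambda_K(w_{\hat J}^+)=[\,K\subseteq J\,]$, hence $\lambda_K(w_J^-w_{\hat J}^+)=\delta_{J,K}$. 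Therefore $\lambda_K(p_J)=\lambda_K(x_J)\,\delta_{J,K}$ vanishes for $K\neq J$, and then the completeness relation $\sum_J\lambda_K(p_J)=\lambda_K(1)=1$ forces $\lambda_K(p_K)=1$; so $\lambda_K(p_J)=\delta_{J,K}$. Consequently $\bar p_J$ is the primitive idempotent of $\mathbb{C}H_0(S_N)/\mathcal N=\mathbb{C}^{2^{N-1}}$ cutting out the $J$-th coordinate, so on $\mathbb{C}H_0(S_N)p_J/\mathcal N p_J$ every $x$ acts by the scalar $\lambda_J(x)$: this module affords $\lambda_J$.

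Finally, quotienting $\mathbb{C}H_0(S_N)=\bigoplus_J\mathbb{C}H_0(S_N)p_J$ by $\mathcal N$ and using $\mathcal N p_J=\operatorname{rad}(\mathbb{C}H_0(S_N)p_J)$ gives $\mathbb{C}H_0(S_N)/\mathcal N\cong\bigoplus_J\mathbb{C}H_0(S_N)p_J/\mathcal N p_J$, a sum of $2^{N-1}$ one-dimensional pieces affording the pairwise distinct characters $\lambda_J$, which recovers $\mathbb{C}H_0(S_N)/\mathcal N\cong\mathbb{C}^{2^{N-1}}$ via $(\lambda_J)_{J\subseteq I}$. I expect the substantive parts to be the dimension comparison of the first step and, above all, the computation $\lambda_K(w_J^-w_{\hat J}^+)=\delta_{J,K}$ together with the use of $\sum_J p_J=1$ in the third step; everything else is a formal consequence of Norton's results and standard facts about projective modules over an Artinian algebra.
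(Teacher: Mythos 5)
The paper does not actually prove this statement: it is quoted verbatim from Norton's 1979 paper ``for ease of reference,'' and the author explicitly remarks only that Norton's proof is non-constructive. So there is no in-paper argument to compare against; what you have written is a reconstruction, and it is a correct one. Your derivation leans on exactly the three facts the paper records from Norton immediately before the theorem --- the direct sum decomposition $\mathbb{C}H_0(S_N)=\bigoplus_J\mathbb{C}H_0(S_N)w_J^-w_{\hat{J}}^+$, the description of $\mathcal{N}$ as a nilpotent (two-sided) ideal of codimension $2^{N-1}$, and the fact that the $\lambda_J$ exhaust the simple modules --- and everything else is standard Artinian algebra theory. The dimension-count forcing $\mathbb{C}H_0(S_N)p_J=\mathbb{C}H_0(S_N)w_J^-w_{\hat{J}}^+$ is sound, the identification of $\mathcal{N}$ with the Jacobson radical via codimension is sound (given that all simples are one-dimensional), and the computation $\lambda_K(w_J^-w_{\hat{J}}^+)=\delta_{J,K}$, combined with $\sum_J\lambda_K(p_J)=1$ to pin down $\lambda_K(p_J)=\delta_{J,K}$, is the right way to identify the top of each projective; that last step is indeed the only genuinely combinatorial input. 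The one caveat worth stating if this were to stand alone is that the listed inputs are themselves taken on faith from Norton (in particular that the $w_J^-w_{\hat{J}}^+$ summands really do decompose the regular module, and that $\mathcal{N}$ as described is a two-sided nilpotent ideal), so your argument shows the theorem follows formally from those facts rather than from scratch --- which is a perfectly reasonable division of labor given how the paper presents the material.
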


The elements $w_J^-w_{\hat{J}}^+$ are neiter orthogonal nor idempotent; the proof of Norton's theorem is non-constructive, and does not give a formula for the idempotents.
\end{remark}

\section{Diagram Demipotents}
\label{sec:cand}

The elements $\pi_i$ and $(1-\pi_i)$ are idempotent.  There are actually $2^{N-1}$ idempotents in $H_0(S_N)$, namely the elements $w_J^+$ for any $J\subset I$.  These idempotents are clearly not orthogonal, though.  The goal of this paper is to give a formula for a collection of \emph{orthogonal} idempotents in $\mathbb{C}H_0(S_N)$.  

For our purposes, it will be convenient to index subsets of the index set $I$ (and thus also simple and projective representations) by \emph{signed diagrams.}

\begin{definition}
A \emph{signed diagram} is a Dynkin diagram in which each vertex is labeled with a $+$ or $-$.  
\end{definition}

Figure~\ref{fig:signedDiagram} depicts a signed diagram for type $A_7$, corresponding to $H_0(S_8)$.  For brevity, a diagram can be written as just a string of signs.  For example, the signed diagram in the Figure is written $++---+-$.

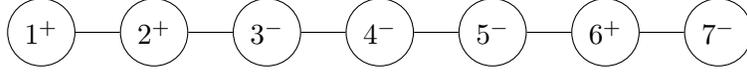
\begin{figure}
\centering
\begin{tikzpicture}[scale=.75]
\node (one) at ( 0,0) [circle,draw] {$1^+$};
\node (two) at ( 2,0) [circle,draw] {$2^+$};
\node (three) at ( 4,0) [circle,draw] {$3^-$};
\node (four) at ( 6,0) [circle,draw] {$4^-$};
\node (five) at ( 8,0) [circle,draw] {$5^-$};
\node (six) at ( 10,0) [circle,draw] {$6^+$};
\node (seven) at ( 12,0) [circle,draw] {$7^-$};

\draw [-] (one.east) -- (two.west);
\draw [-] (two.east) -- (three.west);
\draw [-] (three.east) -- (four.west);
\draw [-] (four.east) -- (five.west);
\draw [-] (five.east) -- (six.west);
\draw [-] (six.east) -- (seven.west);
\end{tikzpicture}
\caption{A signed Dynkin diagram for $S_8$.}
\label{fig:signedDiagram}
\end{figure}

We now construct a \emph{diagram demipotent} corresponding to each signed diagram.  Let $P$ be a composition of the index set $I$ obtained from a signed diagram $D$ by grouping together sets of adjacent pluses and minuses.  For the diagram in Figure~\ref{fig:signedDiagram}, we would have $P = \{ \{1,2\} , \{3,4,5\}, \{6,7\} \}$.  Let $P_k$ denote the $k$th subset in $P$.  For each $P_k$, let $w_{P_k}^{sgn(k)}$ be the longest element of the parabolic sub-monoid associated to the index set $P_k$, constructed with the generators $\pi_i$ if $sgn(k)=+$ and constructed with the $(1-\pi_i)$ generators if $sgn(k)=-$.

\begin{definition}
\label{def:ddemipotents}
Let $D$ be a signed diagram with associated composition $P= P_1 \cup \cdots \cup P_m $.  Set: 
\begin{eqnarray*}
L_D&=&w_{P_1}^{sgn(1)}w_{P_2}^{sgn(2)}\cdots w_{P_m}^{sgn(m)},\text{ and }\\
R_D&=&w_{P_m}^{sgn(m)}w_{P_{m-1}}^{sgn(m-1)}\cdots w_{P_1}^{sgn(1)}.
\end{eqnarray*}
The \emph{diagram demipotent} $C_D$ associated to the signed diagram $D$ is then $L_DR_D$.  The \emph{opposite diagram demipotent} $C_D'$ is $R_DL_D$.
\end{definition}

Thus, the diagram demipotent for the diagram in Figure~\ref{fig:signedDiagram} is 
\[
\pi_{121}^+\pi_{345343}^-\pi_{6}^+\pi_{7}^- \pi_{6}^+\pi_{345343}^-\pi_{121}^+.
\]

It is not immediately obvious that these elements are demipotent; this is a direct result of Lemma~\ref{thm:sibRiv}.

For $N=1$, there is only the empty diagram, and the diagram demipotent is just the identity.

For $N=2$, there are two diagrams, $+$ and $-$, and the two diagram demipotents are $\pi_1$ and $1-\pi_1$ respectively.  Notice that these form a decomposition of the identity, as $\pi_i+(1-\pi_i)=1$.

For $N=3$, we have the following list of diagram demipotents.  The first column gives the diagram, the second gives the element written as a product, and the third expands the element as a sum.  For brevity, words in the $\pi_i$ or $\bpi{i}$ generators are written as strings in the subscripts.  Thus, $\pi_1\pi_2$ is abbreviated to $\pi_{12}$.

\begin{equation*}
\begin{array}[b]{|c|c|c|}
D & C_D & \text{Expanded Demipotent} \\ \hline
++ & \pi_{121} 			& \pi_{121} \\
+- & \pi_1\bpi{2}\pi_1 		& \pi_1 - \pi_{121} \\
-+ & \bpi{1}\pi_2\bpi{1} 	& \pi_2 - \pi_{12} - \pi_{21} + \pi_{121} \\
- -& \bpi{121} 			& 1 - \pi_1 - \pi_2 + \pi_{12} + \pi_{21} - \pi_{121}\\
\end{array}
\end{equation*}

Observations.

\begin{itemize}
\item The idempotent $\bpi{121}$ is an alternating sum over the monoid.  This is a general phenomenon: By \cite{norton79}, $w_J^-$ is the length-alternating signed sum over the elements of the parabolic sub-monoid with generators indexed by $J$.

\item The shortest element in each expanded sum is an idempotent in the monoid with $\pi_i$ generators; this is also a general phenomenon.  The shortest term is just the product of longest elements in nonadjacent parabolic sub-monoids, and is thus idempotent.  Then the shortest term of $C_D$ is $\pi_J^+$, where $J$ is the set of nodes in $D$ marked with a $+$.  Each diagram yields a different leading term, so we can immediately see that the $2^{N-1}$ idempotents in the monoid appear as a leading term for exactly one of the diagram demipotents, and that they are linearly independent.

\item For many purposes, one only needs to explicitly compute half of the list of diagram demipotents; the other half can be obtained via the automorphism $\Psi$.  A given diagram demipotent $x$ is orthogonal to $\Psi(x)$, since one has left and right $\pi_1$ descents, and the other has left and right $\bpi{1}$ descents, and $\pi_1 \bpi{1}=0$.

\item The diagram demipotents are fixed under the automorphism determined by $\pi_{\sigma}\rightarrow \pi_{sigma^{-1}}$.  In particular, $L_D$ is the reverse of $R_D$, and $C_D$ can be expressed as a palindrome in the alphabet $\{\pi_i, \bpi{i}\}$.

\item The diagram demipotents $C^D$ and $C^E$ for $D\neq E$ do not necessarily commute.  Non-commuting demipotents first arise with $N=6$.  However, the idempotents obtained from the demipotents are orthogonal and do commute.

\item It should also be noted that these demipotents (and the resulting idempotents) are not in the projective modules constructed by Norton, but generate projective modules isomorphic to Norton's.

\item The diagram demipotents $C_D$ listed here are not fixed under the automorphism induced by the Dynkin diagram automorphism.  In particular, the `opposite' diagram demipotents $C_D'=R_DL_D$ really are different elements of the algebra, and yield an equally valid but different set of orthogonal idempotents.  For purposes of comparison, the diagram demipotents for the reversed Dynkin diagram are listed below for $N=3$.

\begin{equation*}
\begin{array}[b]{|c|c|c|}
D & C_D' & \text{Expanded Demipotent} \\ \hline
++ & \pi_{212} 			& \pi_{212} \\
+- & \pi_2\bpi{1}\pi_2 		& \pi_2 - \pi_{212} \\
-+ & \bpi{2}\pi_1\bpi{2} & \pi_1 - \pi_{12} - \pi_{21} + \pi_{212} \\
- -& \bpi{212} 	& 1 - \pi_1 - \pi_2 + \pi_{12} + \pi_{21} - \pi_{212}\\
\end{array}
\end{equation*}
\end{itemize}

For $N\leq 4$, the diagram demipotents are actually idempotent and orthogonal.  For larger $N$, raising the diagram demipotent to a sufficiently large power yields an idempotent (see below~\ref{thm:main}); in other words, the diagram demipotents are demipotent.  The power that an diagram demipotent must be raised to in order to obtain an actual idempotent is called its \emph{nilpotence degree}.

For $N=5$, two of the diagram demipotents need to be squared to obtain an idempotent.  For $N=6$, eight elements must be squared.  For $N=7$, there are four elements that must be cubed, and many others must be squared.  Some pretty good upper bounds on the nilpotence degree of the diagram demipotents are given in Section~\ref{sec:nilp}.  As a preview, for $N>4$ the nilpotence degree is always $\leq N-3$, and conditions on the diagram can often greatly reduce this bound.

As an alternative to raising the demipotent to some power, we can express the idempotents as a product of diagram demipotents for smaller diagrams.  Let $D_k$ be the signed diagram obtained by taking only the first $k$ nodes of $D$.  Then, as we will see, the idempotents can also be expressed as the product $C_{D_1}C_{D_2}C_{D_3}\cdots C_{D_{N-1}=D}$.

\begin{remark}[Left Weak Order.]
Let $m$ be a standard basis element of the $0$-Hecke algebra in the $\pi_i$ basis.  Then for any $i\in D_L(m)$, $\pi_i m=m$, and for any $i\not \in D_L(m)$ then $\pi_im\geq_L m$, in left weak order.  This is an adaptation of a standard fact in the theory of Coxeter groups to the $0$-Hecke setting.
\end{remark}

\begin{corollary}[Diagram Demipotent Triangularity]
\label{cor:triangularity}
Let $C_D$ be a diagram demipotent and $m$ an element of the $0$-Hecke monoid in the $\pi_i$ generators.  Then $C_Dm = \lambda m + x$, where $x$ is an element of $H_0(S_N)$ spanned by monoid elements lower in left weak order than $m$, and $\lambda \in \{0,1\}$.  Furthermore, $\lambda=1$ if and only if $Des(m)$ is exactly the set of nodes in $D$ marked with pluses.
\end{corollary}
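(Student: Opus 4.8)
The plan is to read off the coefficient $\lambda$ of $m$ in $C_Dm$ as the value of one of Norton's simple representations $\lambda_K$ on $C_D$, and to get the support statement for $x$ directly from the one-sidedness of left multiplication in the weak order.

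First I would record the fact underlying the Left Weak Order remark: for any monoid element $n$ and any $i$, $\pi_in=n$ if $i\in D_L(n)$ and $\ell(\pi_in)=\ell(n)+1$ otherwise, so $\pi_in\geq_L n$ in either case. Applying this along a reduced word for a monoid element $w$ gives $wm\geq_L m$; and since the length is non-decreasing along the word and must return to $\ell(m)$ whenever $wm=m$, I also get the sharper fact that $wm=m$ holds if and only if $J(w)\subseteq D_L(m)$. Expanding $C_D=\sum_w c_w\,w$ in the monoid basis (the ``expanded demipotent'') gives $C_Dm=\sum_w c_w(wm)$, every term of which is $\geq_L m$; collecting the terms with $wm=m$ yields $C_Dm=\lambda m+x$, with $x$ supported on monoid elements distinct from $m$ and comparable to it in left weak order, and
\[
\lambda=\sum_{w\,:\;J(w)\subseteq D_L(m)}c_w .
\]

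Next I would identify $\lambda$ with $\lambda_K(C_D)$ for $K=I\setminus D_L(m)$. Since $\lambda_K$ is an algebra homomorphism, $\lambda_K(w)=\prod_i\lambda_K(\pi_i)$ over any reduced word for $w$, which equals $1$ precisely when no letter of $w$ lies in $K$, i.e. when $J(w)\subseteq D_L(m)$; hence $\lambda=\sum_w c_w\,\lambda_K(w)=\lambda_K(C_D)$. Because $C_D=L_DR_D$ with $L_D$ and $R_D$ products of the same block longest elements $w_{P_k}^{sgn(k)}$ (one per block $P_k$) in opposite orders, and scalars commute,
\[
\lambda_K(C_D)=\lambda_K(L_D)\,\lambda_K(R_D)=\Big(\prod_{k}\lambda_K\big(w_{P_k}^{sgn(k)}\big)\Big)^{2}.
\]
A reduced word for $w_{P_k}^{+}$ uses exactly the letters of $P_k$, so $\lambda_K(w_{P_k}^{+})=\prod_{i\in P_k}\lambda_K(\pi_i)$ is $1$ iff $P_k\cap K=\emptyset$, i.e. iff $P_k\subseteq D_L(m)$; dually, $\lambda_K(1-\pi_i)=1-\lambda_K(\pi_i)$ is $1$ iff $i\in K$, so $\lambda_K(w_{P_k}^{-})$ is $1$ iff $P_k\subseteq K$, i.e. iff $P_k\cap D_L(m)=\emptyset$. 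Thus each factor lies in $\{0,1\}$, so $\lambda\in\{0,1\}$, and $\lambda=1$ iff every $+$-block of $D$ is contained in $D_L(m)$ and every $-$-block is disjoint from it. Since the $+$-blocks exhaust the set of $+$-nodes of $D$ and the $-$-blocks exhaust its complement in $I$, this is exactly the condition that $D_L(m)$ equals the set of $+$-nodes of $D$, completing the proof.

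The hard part is the combinatorial equivalence $wm=m$ iff $J(w)\subseteq D_L(m)$ from the first step, in particular its forward direction; I would prove it by the length-monotonicity argument above, in which an overall equality $wm=m$ forces every intermediate left multiplication by a $\pi_i$ to be an equality, hence every letter of a reduced word for $w$ to be a left descent of $m$. Everything after that is bookkeeping: the collapse $\lambda_K(L_DR_D)=\lambda_K(L_D)^2$ is precisely what makes $\lambda$ idempotent (matching the $N=3$ expanded-demipotent table), and the descent computation for the factors $w_{P_k}^{\pm}$ is immediate from the multiplicativity of $\lambda_K$ along reduced words.
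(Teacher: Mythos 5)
Your proof is correct, and it reaches the corollary by a route that differs from the paper's in packaging if not in substance. The paper's entire proof is the observation that $C_D$ is a product of the operators $\pi_i$ and $1-\pi_i$, each of which acts triangularly for left weak order with diagonal coefficient at $m$ equal to $1$ or $0$ according to whether $i$ is or is not a left descent of $m$; composing one generator at a time gives both the triangularity and the product formula for $\lambda$ at once. You instead expand $C_D$ in the monoid basis, prove the equivalence $wm=m\iff J(w)\subseteq D_L(m)$ by length monotonicity, and then identify the coefficient of $m$ with the character value $\lambda_K(C_D)$ for $K=I\setminus D_L(m)$, evaluated blockwise as $\bigl(\prod_k\lambda_K(w_{P_k}^{\pm})\bigr)^2$. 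Both arguments compute the same diagonal matrix entry, and your descent analysis of each factor is exactly the paper's generator-by-generator bookkeeping in disguise; what your version adds is the explicit identification of $\lambda$ with the value of Norton's simple character indexed by the complement of $D_L(m)$, which explains $\lambda\in\{0,1\}$ as the square of a $\{0,1\}$-valued quantity and ties the leading coefficient directly to the simple module attached to $D_L(m)$. One point of reconciliation with the printed statement: your $x$ is supported on monoid elements strictly above $m$ in left weak order, which is what the Left Weak Order remark ($\pi_i m\geq_L m$) actually yields; the corollary as printed says ``lower,'' but either direction supplies the triangularity used in the proposition that follows, so nothing downstream is affected.
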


\begin{proof}
The diagram demipotent $C_D$ is a product of $\pi_i$'s and $(1-\pi_i)$'s.
\end{proof}

\begin{proposition}
Each diagram demipotent is the sum of a non-zero idempotent part and a nilpotent part.  That is, all eigenvalues of a diagram demipotent are either $1$ or $0$.
\end{proposition}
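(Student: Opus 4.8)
The plan is to deduce everything from the Diagram Demipotent Triangularity corollary together with a little elementary commutative algebra. First I would fix a linear extension of the left weak order on $H_0(S_N)$ and order the monoid basis of $\mathbb{C}H_0(S_N)$ accordingly. By Corollary~\ref{cor:triangularity}, the operator $\ell_{C_D}\colon x\mapsto C_Dx$ is, in this basis, upper triangular with diagonal entries $\lambda_m\in\{0,1\}$, where $\lambda_m=1$ exactly when $D_L(m)$ equals the set $J$ of plus-labelled nodes of $D$. Hence the characteristic polynomial of $\ell_{C_D}$ is $t^{a}(t-1)^{b}$, where $b=\#\{m\in H_0(S_N):D_L(m)=J\}$ and $a=N!-b$; in particular every eigenvalue of $\ell_{C_D}$ is $0$ or $1$, which already proves the ``eigenvalues'' reformulation in the statement.

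Next I would pass from the operator back to the algebra element. By Cayley--Hamilton, $\ell_{C_D}^{\,a}(\ell_{C_D}-\mathrm{Id})^{\,b}=0$, and since $\mathbb{C}H_0(S_N)$ is unital, the regular representation $x\mapsto\ell_x$ is faithful, so the polynomial identity descends to $C_D^{\,a}(C_D-1)^{\,b}=0$ in $\mathbb{C}H_0(S_N)$. I would also record that $b\geq 1$: the longest element $w_J^+$ of the parabolic submonoid generated by $\{\pi_i:i\in J\}$ has left descent set exactly $J$, so it contributes a diagonal entry $1$ (this is the only place the ``only if'' half of Corollary~\ref{cor:triangularity} is used, and it is what prevents $C_D$ from being nilpotent).

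Finally I would split off the idempotent. Since $t^{a}$ and $(t-1)^{b}$ are coprime, the Chinese Remainder Theorem produces a polynomial $g$ with $g\equiv 0\pmod{t^{a}}$ and $g\equiv 1\pmod{(t-1)^{b}}$; set $e=g(C_D)\in\mathbb{C}[C_D]$. Then $e^2=e$, because $t^{a}(t-1)^{b}$ divides $g^2-g$ and $C_D$ annihilates $t^{a}(t-1)^{b}$; and $e\neq 0$, since $e=0$ would force $1\equiv 0\pmod{(t-1)^{b}}$, impossible as $b\geq 1$. Writing $n:=C_D-e$, one has $n=(C_D-1)e+C_D(1-e)$, and using $t^{a}\mid g$ and $(t-1)^{b}\mid g-1$ one checks $(C_D-1)^{b}e=0$ and $C_D^{a}(1-e)=0$; since moreover $\bigl((C_D-1)e\bigr)\bigl(C_D(1-e)\bigr)=0$, the element $n$ is a sum of two commuting, mutually annihilating nilpotents, hence nilpotent, and it commutes with $e$. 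This yields $C_D=e+n$ with $e$ a nonzero idempotent and $n$ nilpotent.

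I do not expect a genuine obstacle here: all the substantive work is carried by Corollary~\ref{cor:triangularity}. The only points needing care are the faithfulness step used to move the polynomial identity from matrices into $\mathbb{C}H_0(S_N)$, and the verification that the idempotent is nonzero via the element $w_J^+$; the CRT bookkeeping in the last paragraph is routine and could be compressed or replaced by taking a suitably high power of $C_D$ once one knows (from Section~\ref{sec:nilp}) that $C_D$ is demipotent, though the argument above avoids relying on that.
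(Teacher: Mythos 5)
Your proposal is correct and follows essentially the same route as the paper: both arguments order the monoid basis compatibly with the weak/Bruhat order, invoke Corollary~\ref{cor:triangularity} to get a triangular matrix with diagonal entries in $\{0,1\}$, and exhibit a nonzero $1$-eigenvalue via an element whose descent set is exactly the plus-nodes of $D$. The Cayley--Hamilton/CRT bookkeeping in your last two paragraphs merely makes explicit the Jordan-decomposition step that the paper states without elaboration, so it is a welcome but not substantively different addition.
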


\begin{proof}
Assign a total ordering to the basis of $H_0(S_N)$ in the $\pi_i$ generators that respects the Bruhat order.  Then by Corollary~\ref{cor:triangularity}, the matrix $M_D$ of any diagram demipotent $C_D$ is lower triangular, and each diagonal entry of $M_D$ is either one or zero.  A lower triangular matrix with diagonal entries in $\{0,1\}$ has eigenvalues in $\{0,1\}$; thus $C_D$ is the sum of an idempotent and a nilpotent part.

To show that the idempotent part is non-zero, consider any element $m$ of the monoid such that $Des(m)$ is exactly the set of nodes in $D$ marked with pluses.  Then $C_Dm = m + x$ shows that $C_D$ has a $1$ on the diagonal, and thus has $1$ as an eigenvalue.  Then the idempotent part of $C_D$ is non-zero.  (This argument still works if $D$ has no plusses, since the associated diagram demipotent fixes the identity.)
\end{proof}

\section{Branching}
\label{sec:bra}

There is a convenient and useful branching of the diagram demipotents for $H_0(S_N)$ into diagram demipotents for $H_0(S_{N+1})$.

\begin{lemma}
\label{lem:longWords}
Let $J=\{i,i+1,\ldots,N-1\}$
Then $w_J^{+}\pi_N w_J^{+}$ is the longest element in the generators $i$ through $N$.  Likewise, $w_J^{+}\pi_{i-1}w_J^{+}$ is the longest element in the generators $i-1$ through $N-1$.  Similar statements hold for $w_J^{-}\bpi{N} w_J^{-}$ and $w_J^{-}\bpi{i-1} w_J^{-}$.
\end{lemma}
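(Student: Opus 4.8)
The plan is to track these products through the right action on positions recalled above --- $\sigma\pi_k=\sigma s_k$ when position $k$ of $\sigma$ carries a smaller value than position $k+1$, and $\sigma\pi_k=\sigma$ otherwise --- using the bijection between $0$-Hecke monoid elements and permutations of $\{1,\dots,N+1\}$. The engine of the argument is an auxiliary fact I would prove first: for $J=\{i,i+1,\dots,N-1\}$ and any monoid element $\pi_\sigma$, the product $\pi_\sigma w_J^{+}$ is the monoid element whose one-line notation is obtained from that of $\sigma$ by rearranging the values in positions $i,i+1,\dots,N$ into strictly decreasing order and fixing every other position. To see this, write $w_J^{+}=\pi_{v_1}\cdots\pi_{v_r}$ with $v_1\cdots v_r$ a reduced word, all $v_t\in J$, and compute $\pi_\sigma w_J^{+}$ by successive right multiplications: each $\pi_{v_t}$ with $v_t\in J$ only ever transposes two adjacent positions inside $\{i,\dots,N\}$, so positions outside $\{i,\dots,N\}$ never move and the set of values on $\{i,\dots,N\}$ is preserved. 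On the other hand, since $w_J^{+}$ is the longest element of the parabolic submonoid on $J$ we have $D_R(w_J^{+})=J$, hence $w_J^{+}\pi_k=w_J^{+}$ for each $k\in J$, hence $(\pi_\sigma w_J^{+})\pi_k=\pi_\sigma w_J^{+}$, which forces the one-line notation of $\pi_\sigma w_J^{+}$ to be strictly decreasing on positions $i,\dots,N$. A permutation is determined by its values outside $\{i,\dots,N\}$ together with the set of values it places on $\{i,\dots,N\}$ and the requirement that it be decreasing there, so these properties pin it down.

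Granting this, I would finish by a direct computation in one-line notation. The permutation of $w_J^{+}$ is $[1,2,\dots,i-1,\,N,N-1,\dots,i,\,N+1]$, the reversal of positions $i$ through $N$. Position $N$ then carries $i<N+1$, so $N\notin D_R(w_J^{+})$ and $w_J^{+}\pi_N=[1,\dots,i-1,\,N,N-1,\dots,i+1,\,N+1,\,i]$; applying the auxiliary fact (sort positions $i,\dots,N$, currently holding $N,N-1,\dots,i+1,N+1$, into decreasing order) gives $w_J^{+}\pi_N w_J^{+}=[1,\dots,i-1,\,N+1,N,\dots,i+1,\,i]$, the reversal of positions $i$ through $N+1$, which is exactly the longest element of the parabolic submonoid on generators $i,\dots,N$. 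The second identity is the mirror image: position $i-1$ of $w_J^{+}$ carries $i-1<N$, so $i-1\notin D_R(w_J^{+})$ and $w_J^{+}\pi_{i-1}=[1,\dots,i-2,\,N,\,i-1,\,N-1,\dots,i,\,N+1]$; sorting positions $i,\dots,N$ into decreasing order yields $[1,\dots,i-2,\,N,N-1,\dots,i,\,i-1,\,N+1]$, the reversal of positions $i-1$ through $N$, i.e. the longest element on generators $i-1,\dots,N-1$. (When $i=N$ the set $J$ is empty, $w_J^{+}=1$, and the same computation degenerates correctly; when $i=1$ there is no generator $i-1$ and the second claim is vacuous.)

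For the minus statements, those involving the $\bpi{k}$ generators, I would invoke the algebra automorphism $\Psi$ with $\Psi(\pi_k)=\bpi{k}$ and $\Psi(w_J^{+})=w_J^{-}$: applying $\Psi$ to the two identities just proved and using that it is an algebra automorphism yields $w_J^{-}\bpi{N}w_J^{-}$ and $w_J^{-}\bpi{i-1}w_J^{-}$ equal to $\Psi$ of the corresponding longest elements, namely $w_{\{i,\dots,N\}}^{-}$ and $w_{\{i-1,\dots,N-1\}}^{-}$. The one step with any genuine content is the auxiliary fact of the first paragraph --- recognizing right multiplication by $w_J^{+}$ as ``sort this block of positions into decreasing order''; once that is in place the lemma is just bookkeeping with one-line notation, so I do not anticipate a real obstacle.
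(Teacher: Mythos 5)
Your proof is correct, but it takes a genuinely different route from the paper's. The paper argues via reduced words and Bruhat order: it exhibits the lexicographically minimal (resp.\ maximal) reduced word for $w_J^{+}$, observes that the concatenation $w_J^{+}\pi_N w_J^{+}$ then contains a reduced word for the longest element on generators $i,\dots,N$ as a subword --- hence dominates it in strong Bruhat order --- while also lying in that parabolic submonoid and therefore being bounded above by its longest element, forcing equality; the minus cases follow by $\Psi$ exactly as in your last paragraph. You instead work entirely in one-line notation, and your key observation --- that right multiplication by $w_J^{+}$ sorts the values in positions $i,\dots,N$ into decreasing order, proved by combining ``only positions in the block ever move'' with $D_R(w_J^{+})=J$ --- reduces the lemma to two explicit two-line computations. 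Your route is more self-contained (it needs no choice of a particular reduced word and no appeal to the subword characterization of Bruhat order, only the descent criterion $\tau\pi_k=\tau$), and the sorting description of $\pi_\sigma w_J^{+}$ is a reusable fact in its own right; the paper's route is shorter on the page and generalizes more readily to statements phrased purely in terms of words, which is the language the rest of Section~4 uses. Both are complete proofs, and both handle the $w_J^{-}$ statements identically via the automorphism $\Psi$.
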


\begin{proof}
Let $K=[i,i+1,\ldots,N-1]$.

The lexicographically minimal reduced word for the longest element in consecutive generators $1$ through $k$ is obtained by concatenating the ascending sequences $\pi_1 \pi_2 \ldots \pi_{k-i}$ for all $0<i<k$.
For example, the longest element in generators $1$ through $4$ is $\pi_{1234123121}$.

Now form the product $m=w_J^+ \pi_N w_J^+$, for example $\pi_{1234123121}\pi_5\pi_{1234123121}$.  This contains a reduced word for $w_Y^+$ as a subword, and is thus greater than or equal to $X$ in the (strong) Bruhat Order.  But since $w_Y^+$ is the longest element in the given generators, $m$ and $w_Y^+$ must be equal.

For the second statement, apply the same methods using the lexicographically maximal word for the longest elements.

The analogous statement follows directly by applying the automorphism $\Psi$.
\end{proof}


Recall that each diagram demipotent $C_D$ is the product of two elements $L_D$ and $R_D$.  For a signed diagram $D$, let $D+$ denote the diagram with an extra $+$ adjoined at the end.  Define $D-$ analogously.  

\begin{corollary}
Let $C_D=L_DR_D$ be the diagram demipotent associated to the signed diagram $D$ for $S_N$.  Then $C_{D+}=L_D \pi_N R_D$ and $C_{D-}=L_D \bpi{N} R_D$.  In particular, $C_{D+} + C_{D-} = C_D$.  Finally, the sum of all diagram demipotents for $H_0(S_N)$ is the identity.
\end{corollary}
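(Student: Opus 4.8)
The plan is to first establish the two structural identities $C_{D+}=L_D\pi_N R_D$ and $C_{D-}=L_D\bpi{N}R_D$, and then to deduce the telescoping relation $C_{D+}+C_{D-}=C_D$ and the global statement $\sum_D C_D=1$ as formal consequences. The structural identities are proved by a case analysis according to whether adjoining the new (rightmost) node to $D$ extends the terminal block of the composition $P$ associated with $D$ or starts a new singleton block.

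Write $P=P_1\cup\cdots\cup P_m$ for the composition attached to $D$, so that by Definition~\ref{def:ddemipotents} we have $L_D=w_{P_1}^{sgn(1)}\cdots w_{P_m}^{sgn(m)}$ and $R_D=w_{P_m}^{sgn(m)}\cdots w_{P_1}^{sgn(1)}$. \emph{Case 1 (the last node of $D$ has sign $-$, or $D$ is empty).} In $D+$ the new node forms its own block, so $L_{D+}=L_D\pi_N$ and $R_{D+}=\pi_N R_D$; since $\pi_N^2=\pi_N$, we get $C_{D+}=L_{D+}R_{D+}=L_D\pi_N\pi_N R_D=L_D\pi_N R_D$. \emph{Case 2 (the last node of $D$ has sign $+$).} Then the terminal block is an interval $P_m=\{j,j+1,\ldots,N-1\}$, which in $D+$ becomes $P_m'=\{j,\ldots,N\}$. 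Factor $L_D=A\,w_{P_m}^{+}$ and $R_D=w_{P_m}^{+}B$, where $A=w_{P_1}^{sgn(1)}\cdots w_{P_{m-1}}^{sgn(m-1)}$ and $B=w_{P_{m-1}}^{sgn(m-1)}\cdots w_{P_1}^{sgn(1)}$ is its reverse. By Lemma~\ref{lem:longWords}, applied with index set $P_m$, we have $w_{P_m}^{+}\pi_N w_{P_m}^{+}=w_{P_m'}^{+}$. Therefore $L_D\pi_N R_D=A\,w_{P_m}^{+}\pi_N w_{P_m}^{+}B=A\,w_{P_m'}^{+}B$, while directly $C_{D+}=L_{D+}R_{D+}=A\,w_{P_m'}^{+}w_{P_m'}^{+}B=A\,w_{P_m'}^{+}B$ by idempotence of $w_{P_m'}^{+}$; the two coincide. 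The identity for $C_{D-}$ follows by the same argument carried out with the $\bpi{i}$ generators and the corresponding clauses of Lemma~\ref{lem:longWords}.

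With the structural identities in hand, $C_{D+}+C_{D-}=L_D(\pi_N+\bpi{N})R_D=L_D R_D=C_D$ because $\pi_N+\bpi{N}=1$. For the last claim I would induct on $N$: when $N=1$ the only signed diagram is empty and $C_{\emptyset}=1$. For the inductive step, every signed diagram $E$ for $S_{N+1}$ is uniquely of the form $D+$ or $D-$ for $D$ the diagram obtained by deleting the last node, so
\[
\sum_{E}C_E=\sum_{D}\bigl(C_{D+}+C_{D-}\bigr)=\sum_{D}C_D=1,
\]
where the last equality is the inductive hypothesis and we use that the identity of $\mathbb{C}H_0(S_N)$ is the identity of $\mathbb{C}H_0(S_{N+1})$ under the parabolic inclusion.

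The only genuine obstacle is the bookkeeping in Case 2: one must verify that the terminal run of equal signs in $D$ is indeed an interval ending at $N-1$, so that Lemma~\ref{lem:longWords} applies with the correct index set, and that the redundant factor of $w_{P_m'}^{+}$ arising in $L_{D+}R_{D+}$ is absorbed by idempotence. Once the factorizations $L_D=A\,w_{P_m}^{+}$ and $R_D=w_{P_m}^{+}B$ are written down the computation collapses immediately; one should also take care to route the empty-diagram and opposite-sign situations through Case 1 rather than Case 2.
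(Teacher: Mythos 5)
Your proposal is correct and follows essentially the same route as the paper: the structural identities come from Lemma~\ref{lem:longWords} (your Case 2) together with idempotence of $\pi_N$ and $\bpi{N}$ (your Case 1), the relation $C_{D+}+C_{D-}=C_D$ follows from $\pi_N+\bpi{N}=1$, and the sum over all diagrams telescopes down to the empty diagram. The paper's proof is just a terser statement of exactly this argument; your case analysis supplies the bookkeeping it leaves implicit.
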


\begin{proof}
The identities $C_{D+}=L_D \pi_N R_D$ and $C_{D-}=L_D \bpi{N} R_D$ are consequences of Lemma~\ref{lem:longWords}, and the identity $C_{D+} + C_{D-} = C_D$ follows directly.

To show that the sum of all diagram demipotents for fixed $N$ is the identity, recall that the diagram demipotent for the empty diagram is the identity, then apply the identity $C_{D+} + C_{D-} = C_D$ repeatedly.
\end{proof}

Next we have a key lemma for proving many of the remaining results in this paper:

\begin{lemma}[Sibling Rivalry]
\label{thm:sibRiv}
Sibling diagram demipotents commute and are orthogonal: $C_{D-}C_{D+}=C_{D+}C_{D-}=0$.  
Equivalently, 
\[
C_{D}C_{D+}=C_{D+}C_{D}=C_{D+}^2 
\text{ and } C_{D}C_{D-}=C_{D-}C_{D}=C_{D-}^2.
\]
\end{lemma}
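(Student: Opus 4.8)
The plan is to reduce the sibling rivalry statement to the identity $C_{D-}C_{D+} = C_{D+}C_{D-} = 0$, since the equivalent formulation follows immediately: writing $C_D = C_{D+} + C_{D-}$ (by the preceding corollary), we get $C_D C_{D+} = C_{D+}^2 + C_{D-}C_{D+}$, and so $C_D C_{D+} = C_{D+}^2$ precisely when $C_{D-}C_{D+} = 0$; the other three equalities are symmetric. So everything hinges on showing the two cross-products vanish. Here I would use the explicit factorizations from the corollary: $C_{D+} = L_D \pi_N R_D$ and $C_{D-} = L_D \bar{\pi_N} R_D$. Then
\[
C_{D-}C_{D+} = L_D \bar{\pi_N} R_D L_D \pi_N R_D
\quad\text{and}\quad
C_{D+}C_{D-} = L_D \pi_N R_D L_D \bar{\pi_N} R_D,
\]
so the whole problem is to understand the middle factor $R_D L_D$ — which is exactly the opposite diagram demipotent $C_D'$ — and in particular how $\bar{\pi_N}$ and $\pi_N$ interact with it on either side.

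The key observation I would pursue is that $R_D L_D = C_D'$ involves only generators $\pi_i$ (and $\bar{\pi_i}$) with $i \le N-1$, hence commutes past... no — it does not commute with $\pi_N$, since $\pi_{N-1}$ appears. So instead I would look for a descent/annihilation argument. The relevant fact is the one recorded in the Bruhat-order remark: if $i \in D_R(\pi_w)$ then $\pi_w w_{\{i\}}^+ = \pi_w$, and dually on the left; combined with $\pi_i \bar{\pi_i} = \bar{\pi_i}\pi_i = 0$. The strategy: show that $\pi_N R_D L_D \pi_N$ or $\bar{\pi_N} R_D L_D \pi_N$ can be rewritten so that a $\pi_N$ meets a $\bar{\pi_N}$. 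Concretely, I expect that $R_D$ ends in (or can be taken to end in) a block whose longest element, together with $\pi_N$, builds a longer element — this is precisely Lemma~\ref{lem:longWords}: if the last block $P_m$ of $D$ has sign making it compatible, then $w_{P_m}^{\pm}\pi_N w_{P_m}^{\pm}$ is a longest element, and iterating, $R_D \pi_N L_D$ (the reverse middle) relates to a longest element on generators through $N$. The cleanest route is probably to show that $C_{D+}$ and $C_{D-}$ are themselves honest idempotents-times... no, they are only demipotent.

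Therefore the argument I would actually commit to: use triangularity (Corollary~\ref{cor:triangularity}) together with the leading-term analysis. Each $C_{D+}m$ has leading coefficient $1$ iff $\mathrm{Des}(m)$ equals the plus-set of $D+$, and similarly $C_{D-}m$ has leading coefficient $1$ iff $\mathrm{Des}(m)$ equals the plus-set of $D-$. Since the plus-set of $D+$ is $(\text{plus-set of }D)\cup\{N\}$ while the plus-set of $D-$ is just the plus-set of $D$, these two target descent sets are distinct — they differ exactly at node $N$. Now for the composite $C_{D-}C_{D+}$: apply $C_{D+}$ to a basis element $m$; the result is a combination of elements $m'$ with $m' \le_L m$, and the coefficient of the top term $m$ is nonzero only when $N \in \mathrm{Des}(m)$; but I would argue more sharply that \emph{every} monoid element appearing in $C_{D+}m$ with nonzero coefficient has $N$ as a right... no, we need a left descent statement since $C_{D-}$ acts on the left. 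The precise claim to establish is: every basis element appearing in $L_D \pi_N R_D$ (viewed as an element of the algebra) has $N \in D_L(\cdot)$, equivalently has a reduced word beginning with $N$, because it is left-divisible by ... hmm, $L_D$ need not contain $\pi_N$.

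Given these competing routes, the approach I would finalize is the rank/support one: show $C_{D+}^\omega = w_{J^+\cup\{N\}}^+ \cdot(\text{stuff})$ has support contained in permutations $\sigma$ with $N \in J(\sigma)$, while $C_{D-}^\omega$ has support in permutations with $N \notin J(\sigma)$ — then since (by the triangularity/eigenvalue proposition) $C_{D+}$ and $C_{D-}$ have the same row and column spans as their idempotent parts, and those idempotent parts are $w_{J'}^-w_{\hat{J'}}^+$-type elements for disjoint-at-$N$ data, the product of one's column space with the other's row space is zero. The main obstacle, and the step I would budget the most care for, is making the "support / $J$-set at node $N$" claim rigorous for $C_{D+}$ and $C_{D-}$ — i.e., controlling not just the leading term but the entire expansion. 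I would handle this by induction on $N$ using the branching identity $C_{E+}+C_{E-}=C_E$ together with Lemma~\ref{lem:longWords} to track how the $\pi_N$ (resp. $\bar\pi_N$) sits inside $L_D\pi_N R_D$, showing it survives in every term as a left descent. Once that support separation is in hand, orthogonality $C_{D-}C_{D+}=C_{D+}C_{D-}=0$ is immediate from $\pi_N\bar\pi_N=\bar\pi_N\pi_N=0$, and the equivalent displayed formulation follows by the one-line algebra above.
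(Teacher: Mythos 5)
Your opening reduction is fine and matches the paper's setup: writing $C_D=C_{D+}+C_{D-}$ and $C_{D\pm}=L_D\pi_N^{\pm}R_D$, the lemma is equivalent to the vanishing of the two cross products. But the argument you finally commit to has a genuine gap: the ``support separation at node $N$'' claim on which everything rests is false in both of the forms you float. For the left-descent version, take $D=-$ so that $C_{D+}=C_{-+}=\bpi{1}\pi_2\bpi{1}=\pi_2-\pi_{12}-\pi_{21}+\pi_{121}$; the term $\pi_{12}$ has left descent set $\{1\}$, so not every monoid element in the expansion of $C_{D+}$ has $N$ as a left descent. For the $J(\sigma)$ version, $C_{D-}=C_{--}=\bpi{121}=1-\pi_1-\pi_2+\pi_{12}+\pi_{21}-\pi_{121}$ visibly contains terms whose support includes node $2$, so its support is not confined to permutations with $N\notin J(\sigma)$. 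Moreover, even if some such separation held, orthogonality would not be ``immediate from $\pi_N\bpi{N}=0$'': that cancellation only fires if a $\pi_N$ and a $\bpi{N}$ actually become adjacent after rewriting, and in $C_{D-}C_{D+}=L_D\bpi{N}R_DL_D\pi_NR_D$ the factor $R_DL_D$ sits between them. You notice this obstruction yourself early on but never remove it; disjointness of supports does not make a product vanish in a noncommutative algebra.

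The paper closes exactly this gap by a genuinely different device: an induction over \emph{two} levels of branching. Writing $q$ for the grandparent, $p$ for the parent and $x$ for the child, Lemma~\ref{lem:longWords} gives the factorizations $p=\pi_b q\pi_b$ and $x=\pi_{cbc}q\pi_{cbc}$ (and barred analogues), and the inductive hypothesis $pq=qp=p^2$, i.e.\ $q\pi_bq\pi_b=\pi_bq\pi_bq\pi_b$, is then fed into a direct computation with the braid and commutation relations to show $px=x^2$ and $\barp\barx=\barx^2$, with the remaining sign patterns handled by the automorphism $\Psi$. Some argument of this concrete computational type (or another mechanism that actually brings $\pi_N$ and $\bpi{N}$ into contact) is what your proposal is missing.
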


\begin{proof}

\begin{figure}
\centering
\begin{tikzpicture}[scale=.5]
\node (epsilon) at ( 2,6) [circle,draw] {$r$};

\node (phi) at ( 0,4) [circle,draw] {$q$};

\node (p) at ( -3, 2 )  [circle,draw] {$p$};
\node (bp) at ( 3, 2 )  [circle,draw] {$\barp$};

\node (x) at ( -5, 0 )  [circle,draw] {$x$};
\node (y) at ( -1, 0 )  [circle,draw] {$y$};

\node (bx) at ( 1, 0 )  [circle,draw] {$\barx$};
\node (by) at ( 5, 0 )  [circle,draw] {$\bary$};

\draw [-] (epsilon.west) -- (phi.north) node [above,midway]  {$+$};

\draw [-] (phi.west) -- (p.north) node [above,midway]  {$+$};
\draw [-] (phi.east) -- (bp.north) node  [above,midway] {$-$};

\draw [-] (p.west) -- (x.north) node  [above,midway] {$+$};
\draw [-] (p.east) -- (y.north) node [above,midway] {$-$};

\draw [-] (bp.west) -- (bx.north) node [above,midway] {$+$};
\draw [-] (bp.east) -- (by.north) node [above,midway] {$-$};
\end{tikzpicture}
\caption{Relationship of Elements in the Proof of the Sibling Rivalry Lemma.}
\label{fig:sibRivalry}
\end{figure}
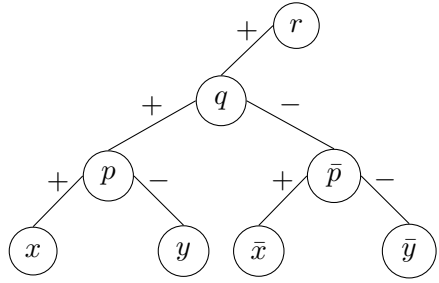

We proceed by induction, using two levels of branching.  Thus, we want to show the orthogonality of two diagram demipotents $x$ and $y$ which are branched from a parent $p$ and grandparent $q$.  Without loss of generality, let $q$ be the positive child of an element $r$.  Call $q$'s other child $\bar{p}$, which in turn has children $\bar{x}$ and $\bar{y}$.  The relations between the elements is summarized in Figure~\ref{fig:sibRivalry}.

The goal, then, is to prove that $yx=0$ and $\bary \barx=0$.  Since $p=x+y$, we have that $yx=(p-x)x=px-x^2$.  Thus, we can equivalently go about proving that $px=x^2$ or $py=y^2$.  It will be easier to show $px=x^2$.  We will also show that $\barp\barx = \barx^2$.  Once this is done, we will have proven the result for diagrams ending in $+++$, $++-$, $+-+$, and $+--$.  By applying the automorphism $\Psi$, we obtain the result for the other four cases.  

One can obtain the reverse equalities $xy=0, \barx\barp=0,$ and so on, either by performing equivalent computations, or else by another use of the $\Psi$ automorphism.  For the latter, suppose that we know $C_{D+}C_{D-}=0$ for arbitrary $D$.  Then applying $\Psi$ to this equation gives $C_{\hat{D}-}C_{\hat{D}+}=0$, where $\hat{D}$ is the signed diagram $D$ with all signs reversed.  Since $D$ was arbitrary, $\hat{D}$ is also arbitrary, so $C_{D-}C_{D+}=0$ for arbitrary $D$.

Let $r=LR$, dropping the $D$ subscript for convenience, generated with $i$ in the index set $I$.  Let the two new generators be $\pi_a, \pi_b$ and $\pi_c$.  Notice that $\pi_b$, $\bpi{b}$, $\pi_c$, and $\bpi{c}$ all commute with $L$ and $R$.  

The inductive hypothesis tells us that $pq=q p=p^2$ and $\barp q=q \barp=\barp^2$.  We also have the following identities:
\begin{itemize}
\item $q = L \pi_a R$,
\item $p = L \pi_a \pi_b \pi_a R = \pi_b q \pi_b$,
\item $x = L \pi_{aba} \pi_c \pi_{aba} R = \pi_{cbc} q \pi_{cbc}$,
\item $pq = q \pi_b q \pi_b = p^2 = \pi_b q \pi_b q \pi_b$.
\end{itemize}

Then we compute directly:
\begin{eqnarray*}
px & = & \pi_b q \pi_b \pi_{cbc} q \pi_{cbc} \\
   & = & \pi_b q \pi_{cbc} q \pi_{cbc} \\
   & = & \pi_{bc} (q \pi_{b} q \pi_b) \pi_{cbc} \\
   & = & \pi_{bc} ( \pi_b q \pi_{b} q \pi_b) \pi_{cbc} \\
   & = & \pi_{bcb} ( q \pi_{b} q ) \pi_{cbc} \\
   & = & \pi_{cbc} ( q \pi_{cbc} q ) \pi_{cbc} \\
   & = & x^2.
\end{eqnarray*}

To complete the proof, we need to show that $\barp\barx = \barx^2$.  To do so, we use the following identities:
\begin{itemize}
\item $q = L \pi_a R$,
\item $\barp = L \pi_a (1-\pi_b) \pi_a R$,
\item $\barx = L \pi_a (1-\pi_b) \pi_c (1-\pi_b) \pi_a R$.
\end{itemize}

Then we expand the following equation:
\[
\barp \barx = L \pi_a (1-\pi_b) \pi_a R L \pi_a (1-\pi_b) \pi_c (1-\pi_b) \pi_a R.
\]
We expand this as follows:
\[
\barp \barx = q^2 \pi_c 
- q \barp \pi_c 
- q \pi_c \barp 
+ q \pi_c \barp \pi_c 
- \barp q \pi_c  
+ \barp^2 \pi_c  
+ \barp \pi_c \barp
- \barp \pi_c \barp \pi_c.
\]

Meanwhile, 
\begin{eqnarray*}
\barx & = & L ( \pi_{ac} - \pi_{abca} - \pi_{acba} + \pi_{abcba} ) R \\
  & = & \pi_cq - \barp\pi_c - \pi_c \barp + \pi_c \barp \pi_c 
\end{eqnarray*}

Expanding $\barx^2$ in terms of $\barp$ and $q$ is a lengthy but straightforward calculation, which yields:
\begin{eqnarray*}
\barx^2 & = & q^2 \pi_c 
- q \barp \pi_c 
- q \pi_c \barp 
+ q \pi_c \barp \pi_c 
- \barp q \pi_c  
+ \barp^2 \pi_c  
+ \barp \pi_c \barp
- \barp \pi_c \barp \pi_c \\
& = & \barp \barx
\end{eqnarray*}

This completes the proof of the lemma.
\end{proof}


\begin{corollary}
The diagram demipotents $C_D$ are demipotent.
\end{corollary}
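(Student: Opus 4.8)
The plan is to combine the Sibling Rivalry Lemma with the decomposition of $C_D$ into an idempotent part plus a nilpotent part established in the Proposition. First I would recall that $C_D = e_D + n_D$, where $e_D$ is the idempotent part and $n_D$ the nilpotent part, and these commute (they are polynomials in $C_D$, being obtained from the Jordan/Fitting decomposition of the lower-triangular matrix $M_D$). Demipotence of $C_D$ — i.e.\ the existence of some $k$ with $C_D^k = C_D^{k+1}$ — is then equivalent to showing $e_D n_D = 0$, i.e.\ that the nilpotent part actually lies in the kernel of the idempotent part, so that $C_D^k = e_D$ for $k$ at least the nilpotence index of $n_D$. The Proposition alone does not give this; we need the algebraic input of Sibling Rivalry.

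The key step is the telescoping identity supplied by the corollary $C_{D} = C_{D+} + C_{D-}$ together with $C_D C_{D\pm} = C_{D\pm} C_D = C_{D\pm}^2$ from Lemma~\ref{thm:sibRiv}. Working down the chain of diagrams $D_1, D_2, \ldots, D_{N-1} = D$ obtained by truncating $D$ to its first $k$ nodes, I would show by induction on $k$ that the product $E_k := C_{D_1} C_{D_2} \cdots C_{D_k}$ is idempotent and satisfies $E_k C_{D_j} = C_{D_j} E_k = E_k$ for every $j \le k$, using that each $C_{D_{j}}$ is a sibling-child of $C_{D_{j-1}}$. The base case $E_1 \in \{\pi_1, \bar\pi_1\}$ is genuinely idempotent. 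For the inductive step, $E_{k+1} = E_k C_{D_{k+1}}$; since $C_{D_{k+1}}$ commutes with $E_k$ (it commutes with each earlier factor, which one checks from the explicit $L\pi_\bullet R$ forms and Lemma~\ref{thm:sibRiv}), $E_{k+1}^2 = E_k^2 C_{D_{k+1}}^2 = E_k (C_{D_k} C_{D_{k+1}}) = E_k C_{D_{k+1}} = E_{k+1}$. This identifies the idempotent part $e_D$ with the product $C_{D_1}\cdots C_{D_{N-1}}$ promised in Section~\ref{sec:cand}, and shows $e_D = e_D C_D = C_D e_D$.

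Finally I would show $C_D^{N-1} = e_D$ (hence $C_D^k = e_D$ for all $k \ge N-1$, giving demipotence with $C_D^\omega = e_D$). Using $C_D = C_{D_{N-1}}$ and repeatedly rewriting with $C_{D_j} C_{D_{j+1}} = C_{D_{j+1}}^2$, one telescopes: $C_D^2 = C_{D_{N-1}}^2 = C_{D_{N-2}} C_{D_{N-1}}$, then $C_D^3 = C_{D_{N-2}} C_{D_{N-1}}^2 = C_{D_{N-2}}^2 C_{D_{N-1}}$ after commuting, and more to the point $C_{D_{N-2}} C_{D_{N-1}}^2 = C_{D_{N-3}} C_{D_{N-2}} C_{D_{N-1}}$, so inductively $C_D^{k} = C_{D_{N-k}} C_{D_{N-k+1}} \cdots C_{D_{N-1}}$ for $1 \le k \le N-1$, and at $k = N-1$ this is exactly $E_{N-1} = e_D$, which is idempotent, so the power stabilizes. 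The main obstacle I anticipate is the commutativity bookkeeping: one must verify carefully that each diagram demipotent $C_{D_j}$ commutes with the partial product $E_{j-1}$ of its ancestors — Sibling Rivalry only directly gives commutation of a node with its immediate parent, so the cross-terms $C_{D_i} C_{D_j}$ for $|i-j| \ge 2$ need either a separate commutation argument from the $L\pi_\bullet R$ presentation or an inductive argument threading through the telescoping identity; getting this step airtight, rather than the idempotence computation itself, is where the real work lies.
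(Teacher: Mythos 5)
Your core telescoping identity $C_D^{k} = C_{D_{N-k}} C_{D_{N-k+1}} \cdots C_{D_{N-1}}$ is correct, and it is really the unrolled form of the paper's own (much shorter) argument: the paper inducts on the diagram, observing that $C_D^k = C_D^{k+1}$ implies $C_{D+}C_D^k = C_{D+}C_D^{k+1}$, which by Sibling Rivalry ($C_{D+}C_D = C_{D+}^2$) collapses to $C_{D+}^{k+1} = C_{D+}^{k+2}$. Both arguments use exactly one input, Lemma~\ref{thm:sibRiv}, applied only between a diagram and its immediate parent. Your version has the side benefit of exhibiting the stable power explicitly as the product $C_{D_1}\cdots C_{D_{N-1}}$, which the paper only records later in Theorem~\ref{thm:main}.

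That said, the one genuinely flawed step is the identity $E_{k+1}^2 = E_k^2\, C_{D_{k+1}}^2$ in your middle paragraph: it requires $C_{D_{k+1}}$ to commute with the entire partial product $E_k$, which Sibling Rivalry does not provide and which you rightly flag as the weak point. The good news is that you do not need it. Inspect your own cascade: at every stage you replace an adjacent pair $C_{D_{j}}C_{D_{j}}$ by $C_{D_{j-1}}C_{D_j}$, and the newly created repeated pair is again adjacent, so no cross-level commutation is ever invoked. Run the telescoping one step further, using that $C_{D_0}$ (the empty diagram) is the identity and that $C_{D_1}^2 = C_{D_1} = C_{D_0}C_{D_1}$, to get $C_D^{N} = C_{D_0}C_{D_1}\cdots C_{D_{N-1}} = C_{D_1}\cdots C_{D_{N-1}} = C_D^{N-1}$ outright. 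Demipotence follows with no claim that $E_{N-1}$ is idempotent and no commutativity bookkeeping at all; the entire middle paragraph, and the Jordan-decomposition framing of the first paragraph (a correct but unused reformulation of demipotence as $e_D n_D = 0$), can simply be deleted.
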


This follows immediately by induction: if $C_D^k=C_D^{k+1}$, then $C_{D+}C_D^k=C_{D+}C_D^{k+1}$, and by sibling rivalry, $C_{D+}^{k+1}=C_{D+}^{k+2}$.

Now we can say a bit more about the structure of the diagram demipotents.  

\begin{proposition}
\label{thm:algebra}
Let $p=C_D, x=C_{D+}, y=C_{D-}$, so $p=x+y$ and $xy=0$.  Let $v$ be an element of $H$.  Furthermore, let $p, x$, and $y$ have abstract Jordan decomposition $p=p_i+p_n$, $x=x_i+x_n$, $y=y_i+y_n$, with $p_ip_n=p_np_i$ and $p_i^2=p_i$, $p_n^k=0$ for some $k$, and similar relations for the Jordan decompositions of $x$ and $y$.

Then we have the following relations:
\begin{enumerate}
\item If there exists $k$ such that $p^kv=0$, then $x^{k+1}v=y^{k+1}v=0$.
\item If $pv=v$, then $x(x-1)v=0$
\item If $(x-1)^kv=0$, then $(x-1)v=0$
\item If $pv=v$ and $x^kv=0$ for some $k$, then $yv=v$.
\item If $xv=v$, then $yv=0$ and $pv=v$.
\item Let $u^x_i$ be a basis of the $1$-space of $x$, so that $xu^x_i=u^x_i$, $yu^x_i=0$ and $pu^x_i=v$, and $u^y_j$ a basis of the $1$-space of $y$.  Then the collection $\{u^x_i, u^y_j\}$ is a basis for the $1$-space of $p$.
\item $p_i=x_i+y_i$, $p_n=x_n+y_n$, $x_iy_i=0$.
\end{enumerate}
\end{proposition}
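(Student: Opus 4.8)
The plan is to derive all seven assertions from the Sibling Rivalry Lemma together with the Proposition that every diagram demipotent has spectrum contained in $\{0,1\}$. Write $p=C_D$, $x=C_{D+}$, $y=C_{D-}$, so Sibling Rivalry gives $p=x+y$, $xy=yx=0$, and $px=xp=x^2$, $py=yp=y^2$; in particular $p$ commutes with each of $x$ and $y$. Each of $p,x,y$ is demipotent with eigenvalues in $\{0,1\}$, and I will use the structural consequence that for $t$ large the idempotent $z^\omega=z^t=z^{t+1}$ coincides with the idempotent part $z_i$ of the abstract Jordan decomposition of $z$: indeed $z$ restricts to the identity on $\operatorname{im}(z^\omega)$ and to a nilpotent operator on $\ker(z^\omega)$, these subspaces are complementary because $z^\omega$ is idempotent, and so $z^\omega$ is precisely the projection defining $z_i$. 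Consequently $z_i=z^\omega$ is a polynomial in $z$ divisible by $z$, and $z_n=z-z^\omega=z(1-z^{t-1})$ is as well.

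Parts (1)--(5) are then short. A preliminary induction from $x^2=px$ and $xp=px$ gives $x^{j+1}=p^jx$ for all $j\ge 1$. Then (1): if $p^kv=0$, then $x^{k+1}v=p^kxv=xp^kv=0$, and symmetrically $y^{k+1}v=0$. (2): if $pv=v$, then $x^2v=pxv=xpv=xv$. (3): if $(x-1)^kv=0$, then $v$ lies in the generalized $1$-eigenspace of $x$, which is $\operatorname{im}(x^\omega)$, and since $x\cdot x^\omega=x^{\omega+1}=x^\omega$ the operator $x$ acts as the identity there, so $(x-1)v=0$. (4): by (2), $xv=x^2v=\cdots=x^kv=0$, hence $yv=pv-xv=v$. (5): if $xv=v$, then $yv=y(xv)=(yx)v=0$ and $pv=xv+yv=v$.

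For (6), note first that (5) is symmetric in $x$ and $y$ (using $xy=yx=0$), so $yu^x_i=0$, $pu^x_i=u^x_i$, $xu^y_j=0$, $pu^y_j=u^y_j$; in particular every $u^x_i$ and $u^y_j$ lies in the $1$-space of $p$. Independence of $\{u^x_i\}\cup\{u^y_j\}$: applying $x$ to a relation $\sum_i a_iu^x_i+\sum_j b_ju^y_j=0$ kills the $u^y$ terms and leaves $\sum_i a_iu^x_i=0$, forcing all $a_i=0$ and then all $b_j=0$. Spanning: for any $v$ with $pv=v$, part (2) shows $x(xv)=xv$, so $xv$ is a combination of the $u^x_i$; likewise $yv$ is a combination of the $u^y_j$; and $v=pv=xv+yv$.

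The main obstacle is (7). By the structural fact above, $x_i=x^\omega$ and $x_n$ are polynomials in $x$ divisible by $x$, and $y_i=y^\omega$, $y_n$ are polynomials in $y$ divisible by $y$. Since $x$ and $y$ commute and $xy=0$, any product of a polynomial in $x$ divisible by $x$ with a polynomial in $y$ divisible by $y$ is zero; applying this to the four relevant products yields $x_iy_i=y_ix_i=0$, $x_iy_n=y_ix_n=0$, and $x_ny_n=y_nx_n=0$. From these: $(x_i+y_i)^2=x_i+y_i$, so $x_i+y_i$ is idempotent; $(x_n+y_n)^m=x_n^m+y_n^m$ vanishes for $m$ large, so $x_n+y_n$ is nilpotent; $(x_i+y_i)(x_n+y_n)=x_ix_n+y_iy_n=(x_n+y_n)(x_i+y_i)$, so they commute; and their sum is $x+y=p$. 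Uniqueness of the abstract Jordan decomposition then gives $p_i=x_i+y_i$ and $p_n=x_n+y_n$, while $x_iy_i=0$ was shown above. The genuinely delicate point is that $z_i$ is literally the power $z^\omega$, which uses both demipotence and the eigenvalue restriction (it would fail for an element with a nontrivial Jordan block at the eigenvalue $1$); granting that, and uniqueness of the Jordan decomposition inside the finite-dimensional algebra, part (7) follows. Parts (1)--(6) need nothing beyond the Sibling Rivalry identities.
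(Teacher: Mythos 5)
Your proof is correct, and for items (1), (2), (4), (5), and (6) it coincides with the paper's argument essentially line for line: the same identity $x^{j+1}=p^jx$, the computation $x^2v=pxv=xv$ when $pv=v$, and the splitting $v=pv=xv+yv$ drive everything. The divergence is in items (3) and (7). For (3) the paper runs an induction up the branching tree: multiply by $y$ to get $yv=0$, hence $(x-1)^kv=(p-1)^kv=0$, and invoke the statement already known for the parent $p$ to conclude $pv=v$ and then $xv=pv=v$. You instead appeal to the demipotence of $x$ and to the fact that its spectrum lies in $\{0,1\}$ (both established earlier in the paper, before this proposition, so there is no circularity) to identify the generalized $1$-eigenspace of $x$ with $\operatorname{im}(x^\omega)$, on which $x$ acts as the identity; this avoids the induction at the cost of importing the spectral proposition. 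For (7) the paper assembles an explicit simultaneous eigenbasis $\{u^x_k,u^y_l,u^0_m\}$ from items (1)--(6) and reads off $p_i=x_i+y_i$; your route --- $x_i=x^\omega$ and $y_i=y^\omega$ are polynomials in $x$ resp.\ $y$ divisible by $x$ resp.\ $y$, all cross products vanish because $xy=yx=0$, and the commuting idempotent-plus-nilpotent decomposition is unique --- is more algebraic and is in fact more careful than the paper about generalized (as opposed to genuine) $0$-eigenvectors, which the paper's basis argument glosses over. The one point you rightly flag as delicate, that $z_i$ is literally the power $z^\omega$, does follow from demipotence plus the $\{0,1\}$ spectrum together with the standard uniqueness of the Jordan--Chevalley decomposition, so the argument is complete. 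Both approaches are sound; yours buys self-contained proofs of (3) and (7) at the price of leaning harder on the earlier structural results about diagram demipotents.
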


\begin{proof}
\begin{enumerate}
\item Multiply the relation $pv=(x+y)v=0$ by $x$, and recall that $xy=0$.

\item Multiply the relation $pv=(x+y)v=v$ by $x$, and recall that $xy=0$.

\item Multiply $(x-1)^kv=0$ by $y$ to get $yv=0$.  Then $pv=xv$.  Then $(x-1)^kv=(p-1)^kv=0$.  By the induction hypothesis, $(p-1)^kv=(p-1)v$ implies that $pv=v$, but then $xv=pv=v$, so the result holds.

\item By $(2)$, we have $x^2v=xv$, so in fact, $x^kv=xv=0$.  Then $v=pv=xv+yv=yv$.

\item If $xv=v$, then multiplying by $y$ immediately gives $0=yxv=yv$.  Since $yv=0$, then $pv=(x+y)v=xv=v$.

\item From the previous item, it is clear that the bases $v_x^i$ and $v_y^j$ exist with the desired properties.  All that remains to show is that they form a basis for the $1$-space of $p$.  

Suppose $v$ is in the $1$-space of $p$, so $pv=v$.  Then let $xv=a$ and $yv=b$ so that $pv=(x+y)v=a+b=v$.  
Then $a=xv=x(a+b)=x^2v+xyv=x^2v=xa$.  Then $a$ is in the $1$-space of $x$, and, simlarly, $b$ is in the $1$-space of $y$.  Then the $1$-space of $p$ is spanned by the $1$-spaces of $x$ and $y$, as desired.

\item Let $M_p$, $M_x$ and $M_y$ be matrices for the action of $p$, $x$ and $y$ on $H$.  Then the above results imply that the $0$-eigenspace of $p$ is inherited by $x$ and $y$, and that the $1$-eigenspace of $p$ splits between $x$ and $y$.

We can thus find a basis $\{u^x_k, u^y_l, u^0_m \}$ of $H$ such that: $pu^0_k=xu^0_k=yu^0_k=0$, $xu^x_k=u^x_k$, $pu^x_k=u^x_k$, $yu^x_k=0$, $yu^y_k=u^y_k$, $pu^y_k=u^y_k$, and $xu^y_k=0$.  In this basis, $p$ acts as the identity on $\{u^x_k, u^y_l\}$, and $x$ and $y$ act as orthogonal idempotents.  This proves that $p_i=x_i+y_i$ and $x_iy_i=0$.  Since $p=p_i+p_n=x_i+x_n+y_i+y_n$, then it follows that $p_n=x_n+y_n$.
\end{enumerate}
\end{proof}

\begin{corollary}
There exists a linear basis $v_D^j$ of $\mathbb{C}H_0(S_N)$, indexed by a signed diagram $D$ and some numbers $j$, such that the idempotent $I_D$ obtained from the abstract Jordan decomposition of $C_D$ fixes every $v_D^j$.  For every signed diagram $E\neq D$, the idempotent $I_E$ kills $v_D^j$.
\end{corollary}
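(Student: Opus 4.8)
The plan is to assemble the basis from the one-dimensional eigenspaces of the diagram demipotents and to exploit the branching tree. Write $V_F$ for the $1$-eigenspace of $C_F$ (equivalently, the image of the idempotent $I_F$) for each signed diagram $F$ of length $\le N-1$. First I would record two structural facts that follow from the earlier results: (a) $V_{F+}\oplus V_{F-}=V_F$ as vector spaces, which is Proposition~\ref{thm:algebra}(6) together with the observation (from part (5)) that the $1$-spaces of $C_{F+}$ and $C_{F-}$ meet only in $0$; and (b) $V_{F+}\subseteq V_F$ and $V_{F-}\subseteq V_F$, again by Proposition~\ref{thm:algebra}(5) and its $\Psi$-image. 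Since $C_\emptyset=\mathrm{id}$, we have $V_\emptyset=\mathbb{C}H_0(S_N)$, so iterating (a) down through the branching tree yields the direct sum decomposition $\mathbb{C}H_0(S_N)=\bigoplus_{|D|=N-1}V_D$ over all full signed diagrams $D$. Choosing for each such $D$ an arbitrary linear basis $\{v_D^j\}_j$ of $V_D$ and taking the union gives a basis of $\mathbb{C}H_0(S_N)$, and $I_D v_D^j=v_D^j$ holds by the very definition of $V_D$.

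The remaining content is to show $I_E v_D^j=0$ whenever $E\ne D$; since $v_D^j\in V_D$, this follows from the sub-claim: \emph{if $G$ is a signed diagram with $|G|\le|F|$ and $G$ is not a prefix of $F$, then $I_G$ annihilates $V_F$} (apply it with $F=D$, $G=E$, which cannot be a prefix of $D$ since $|E|=|D|$ and $E\ne D$). I would prove the sub-claim by induction on $|F|$, writing $F=F'\sigma$ with $\sigma\in\{+,-\}$. If $|G|<|F|$, then $G$ is not a prefix of $F'$, so $I_G$ kills $V_{F'}\supseteq V_F$ by induction. If $|G|=|F|$, write $G=G'\tau$. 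When $G'\ne F'$, induction gives that $I_{G'}$ kills $V_{F'}\supseteq V_F$, and then one needs $\ker I_{G'}\subseteq\ker I_G$; this comes from Proposition~\ref{thm:algebra}(7), which gives $I_{G'}=I_{G'+}+I_{G'-}$ with $I_{G'+}I_{G'-}=I_{G'-}I_{G'+}=0$, whence $\ker I_{G'}=\ker I_{G'+}\cap\ker I_{G'-}\subseteq\ker I_G$. When $G'=F'$ and $\tau\ne\sigma$, so $F$ and $G$ are siblings, I would argue directly: for $v\in V_F$ we have $C_F v=v$, hence $C_G v=0$ by Proposition~\ref{thm:algebra}(5) and the symmetry of the two siblings; but $C_G$ acts invertibly on $V_G$ (there it equals the identity plus a nilpotent), and $\mathbb{C}H_0(S_N)=V_G\oplus\ker I_G$ is $C_G$-stable, so $\ker C_G\subseteq\ker I_G$, giving $I_G v=0$.

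The main obstacle is precisely this cross-annihilation step; the rest is bookkeeping. The delicate points are keeping straight the three conditions ``$C_G v=0$'', ``$I_G v=0$'', and ``$v\in V_F$'', and verifying the two transfer principles the induction rests on: that the kernel of a parent idempotent lies inside the kernel of each child idempotent (which needs orthogonality of the sibling \emph{idempotents}, Proposition~\ref{thm:algebra}(7), not merely of the demipotents), and that $\ker C_G\subseteq\ker I_G$ (which needs $C_G$ to act invertibly on its $1$-eigenspace, immediate from the Jordan decomposition but to be invoked explicitly). Once the sub-claim is organized as an induction on diagram length running along common prefixes in the branching tree, the proof closes.
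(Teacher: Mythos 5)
Your proof is correct and follows essentially the same route as the paper: build the basis by iterating the sibling decomposition of $1$-spaces down the branching tree, then kill the cross-terms using orthogonality of the sibling idempotents at the last common ancestor of $D$ and $E$. Your inductive organization additionally makes explicit a step the paper leaves implicit, namely that $\ker I_{F-}\subseteq\ker I_E$ for every descendant $E$ of $F-$, via $I_{G'}=I_{G'+}+I_{G'-}$ and $I_{G'+}I_{G'-}=0$.
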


The proof of this corollary further shows that this basis respects the branching from $H_0(S_{N-1})$ to $H_0(S_{N})$.  In particular, finding this linear basis for $H_0(S_{N})$ allows the easy recovery of the bases for the indecomposable modules for any $M<N$.

\begin{proof}
Any two sibling idempotents have a linear basis for their $1$-spaces as desired, such that the union of these two bases form a basis for their parent's $1$-space.  Then the union of all such bases gives a basis for the $1$-space of the identity element, which is all of $H$.  

All that remains to show is that for every signed diagram $E\neq D$ with a fixed number of nodes, the idempotent $I_E$ kills $v_D^j$.  Let $F$ be last the common ancestor of $D$ and $E$ under the branching of signed diagrams, so that $F+$ is an ancestor of (or equal to) $D$ and $F-$ is an ancestor of (or equal to) $E$.  Then $I_{F+}$ fixes every $v_D^j$, since the collection $v_D^j$ extends to a basis of the $1$-space of $I_{F+}$.  Likewise, $I_{F-}$ kills every $v_D^j$, by the previous theorem.  
\end{proof}

We now state the main result.  For $D$ a signed diagram, let $D_i$ be the signed sub-diagram consisting of the first $i$ entries of $D$.  

\begin{theorem}
\label{thm:main}
Each diagram demipotent $C_D$ (see Definition \ref{def:ddemipotents}) for $H_0(S_N)$ is demipotent, and yields an idempotent $I_D=C_{D_1}C_{D_2}\cdots C_{D}=C_D^N$.  The collection of these idempotents $\{I_D\}$ form an orthogonal set of primitive idempotents that sum to $1$.
\end{theorem}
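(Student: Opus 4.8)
The plan is to assemble the theorem from the machinery already developed, leveraging Proposition~\ref{thm:algebra} and the Sibling Rivalry Lemma as the two main engines. The statement has three assertions bundled together: (a) each $C_D$ is demipotent and its stable power (equivalently the telescoping product $C_{D_1}\cdots C_D$) is an idempotent $I_D$; (b) the bound $C_D^\omega = C_D^N$ (or some explicit power); and (c) the collection $\{I_D\}$ is a complete orthogonal set of \emph{primitive} idempotents summing to $1$. The demipotence in (a) is already recorded as a corollary of Lemma~\ref{thm:sibRiv}, so the real content of (a) is the identification $I_D = C_{D_1}C_{D_2}\cdots C_D$ with the stable power $C_D^\omega$.

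First I would prove the product formula by induction on $N$, the length of the diagram. Write $D = E+$ or $D = E-$ where $E = D_{N-2}$ is the parent diagram. By Sibling Rivalry, $C_E C_{D} = C_{D} C_E = C_{D}^2$, and more generally $C_E^k C_{D} = C_{D}^{k+1}$ for all $k\ge 1$ by iterating (multiply $C_E^k C_D = C_E^{k-1}(C_E C_D) = C_E^{k-1}C_D^2 = \cdots$). By the induction hypothesis, $C_{E_1}\cdots C_E = C_E^\omega = C_E^m$ for $m$ large, so $C_{D_1}\cdots C_{D_{N-2}} C_D = (C_E^\omega) C_D = C_E^m C_D = C_D^{m+1}$, which for $m$ large is $C_D^\omega$ since $C_D$ is demipotent. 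This simultaneously shows the telescoping product equals the stable power and that this common element, call it $I_D$, is idempotent. The base case $N \le 2$ is the explicit computation in the text.

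Next, orthogonality and the sum-to-one identity. The sum $\sum_D I_D = 1$ will follow if I can show $\sum_D I_D = \sum_D C_D^\omega$ telescopes: from $C_{E+}^\omega + C_{E-}^\omega$ I want to recover $C_E^\omega$. Here I would invoke item (7) of Proposition~\ref{thm:algebra}, which gives exactly $p_i = x_i + y_i$ with $x_i y_i = 0$ for $p = C_E$, $x = C_{E+}$, $y = C_{E-}$; since $I_D$ is precisely the idempotent part of $C_D$ (its stable power), this reads $I_E = I_{E+} + I_{E-}$ and $I_{E+}I_{E-} = 0$. Iterating from the empty diagram (whose demipotent is $1$) gives $\sum_{|D|=N-1} I_D = 1$. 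For orthogonality of $I_D$ and $I_E$ when $D\neq E$: let $F$ be the last common ancestor, so $F+$ precedes (or equals) $D$ and $F-$ precedes (or equals) $E$; it suffices to show $I_{F+}I_{F-} = 0$, which is the $x_i y_i = 0$ just cited — but I also need that $I_D$ is ``absorbed'' by $I_{F+}$ (i.e. $I_{F+}I_D = I_D I_{F+} = I_D$) and annihilated by $I_{F-}$. The absorption follows from the product formula since $I_D = I_{F+} \cdot (\text{later factors})$ and $I_{F+}$ is idempotent commuting appropriately — more carefully, from items (4)--(6) of Proposition~\ref{thm:algebra} applied along the chain of descents, or directly from the basis statement in the Corollary preceding the theorem, which already says $I_E$ kills $v_D^j$ for $E\neq D$ and $I_D$ fixes them; combined with the fact that $\{v_D^j\}_D$ is a basis of $\mathbb{C}H_0(S_N)$, this gives $I_D I_E = \delta_{DE} I_D$ directly as operators, hence as algebra elements. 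That is the cleanest route and I would take it.

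Finally, primitivity and the explicit power $C_D^N$. Primitivity: by Norton's Theorem~\ref{thm:norton}, it suffices to show each $I_D$ is nonzero and lies in $\mathbb{C}H_0(S_N)w_J^- w_{\hat J}^+$ for the appropriate $J$ (the set of $+$-nodes of $D$), since a nonzero idempotent in an indecomposable projective with one-dimensional top is automatically primitive; nonvanishing follows from the Proposition showing the idempotent part of $C_D$ is nonzero (it has $1$ as an eigenvalue via any monoid element $m$ with $\mathrm{Des}(m)$ equal to the $+$-nodes), and membership in the correct left ideal follows from the leading-term analysis (the shortest term of $C_D$ is $w_J^+$) together with the descent structure $w_J^- w_{\hat J}^+$ has; alternatively, since we already have $2^{N-1}$ pairwise orthogonal nonzero idempotents summing to $1$ and Norton tells us there are exactly $2^{N-1}$ blocks, each $I_D$ must be primitive by a counting/dimension argument. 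For the power bound $C_D^N$: the induction above yields $C_D^\omega = C_D^{m+1}$ where $m$ is the stabilization exponent of the parent; tracking this, the exponent grows by at most $1$ at each branching step and starts at $1$, giving $C_D^\omega = C_D^{N-1}$ at worst, and $C_D^N$ is then safely stable. The main obstacle I anticipate is the bookkeeping in the orthogonality argument — ensuring $I_{F+}$ genuinely absorbs all descendants $I_D$ and that this is compatible with the non-commutativity of the demipotents noted in the text — which is why routing through the basis $\{v_D^j\}$ rather than through element-wise manipulation is the safer strategy.
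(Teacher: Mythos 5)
Your proposal is correct and follows essentially the same route as the paper: demipotence and the exponent bound via Proposition~\ref{thm:algebra} and the branching structure, the sum-to-one identity by telescoping from the empty diagram, and orthogonality via the basis $\{v_D^j\}$ from the preceding corollary. You are in fact slightly more complete than the paper's own (quite terse) proof, which leaves the product formula $I_D = C_{D_1}\cdots C_D$ and the primitivity claim essentially implicit, whereas you verify the former by iterating Sibling Rivalry and the latter via Norton's Theorem~\ref{thm:norton} together with the count of $2^{N-1}$ orthogonal nonzero idempotents.
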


\begin{proof}
We can completely determine an element of $\mathbb{C}H_0(S_N)$ by examining its natural action on all of $\mathbb{C}H_0(S_N)$, since if $xv=yv$ for all $v\in \mathbb{C}H_0(S_N)$, then $(x-y)v=0$ for every $v$, and $0$ is the only element of $\mathbb{C}H_0(S_N)$ that kills every element of $\mathbb{C}H_0(S_N)$.

The previous results show that the characteristic polynomial of each diagram demipotent is $X^a(X-1)^b$ for some non-negative integers $a$ and $b$, with all nilpotence associated with the $0$-eigenvalue.  This establishes that the diagram demipotents $C_D$ are actually demipotent, in the sense that there exists some $k$ such that $(C_D)^k$ is idempotent.  Theorem~\ref{thm:algebra} shows that this $k$ grows by at most one with each branching, and thus $k\leq N$.  A prior corollary shows that the idempotents sum to the identity.

The previous corollary establishes a basis for $\mathbb{C}H_0(S_N)$ such that each idempotent $I_D$ either kills or fixes each element of the basis, and that for each $E\neq D$, $I_E$ kills the $1$-space of $I_D$.  Since $I_D$ is in the $1$-space of $I_D$, then $I_E$ must also kill $I_D$.  This shows that the idempotents are orthogonal, and completes the theorem.
\end{proof}

\section{Nilpotence Degree of Diagram Demipotents}
\label{sec:nilp}

Take any $m$ in the $0$-Hecke monoid whose descent set is exactly the set of positive nodes in the signed diagram $D$.  Then $C_Dm=m + (\text{lower order terms})$, by a previous lemma, and $I_Dm= (C_D)^k(m)=m + (\text{lower order terms})$.  The set $\{ I_Dm | Des(m) = \{ \text{positive nodes in D} \}\}$ is thus linearly independent in $H_0(S_N)$, and gives a basis for the projective module corresponding to the idempotent $I_D$.

We have shown that for any diagram demipotent $C_D$, there exists a minimal integer $k$ such that $(C_D)^k$ is idempotent.  Call $k$ the \emph{nilpotence degree} of $C_D$.  The nilpotence degree of all diagram demipotents for $N\leq 7$ is summarized in Figure~\ref{fig:nilpotence}.

\begin{figure}
\centering
\begin{tikzpicture}[scale=.4]

\node (empty) at ( 8,18) [circle,draw] {$1$};

\node (p) at ( -1,15) [circle,draw] {$1$};
\node (m) at ( 17,15) [circle,draw] {$\ldots$};

\draw [-] (empty.south) -- (p.north) node [above,midway]  {$+$};
\draw [-] (empty.south) -- (m.north) node [above,midway]  {$-$};

\node (pp) at ( -9,12) [circle,draw] {$1$};
\node (pm) at (  7,12) [circle,draw] {$1$};

\draw [-] (p.south) -- (pp.north) node [above,midway]  {$+$};
\draw [-] (p.south) -- (pm.north) node [above,midway]  {$-$};

\node (ppp) at (-13,9) [circle,draw] {$1$};
\node (ppm) at ( -5,9) [circle,draw] {$1$};
\node (pmp) at (  3,9) [circle,draw] {$1$};
\node (pmm) at ( 11,9) [circle,draw] {$1$};

\draw [-] (pp.south) -- (ppp.north) node [above,midway]  {$+$};
\draw [-] (pp.south) -- (ppm.north) node [above,midway]  {$-$};
\draw [-] (pm.south) -- (pmp.north) node [above,midway]  {$+$};
\draw [-] (pm.south) -- (pmm.north) node [above,midway]  {$-$};

\node (pppp) at (-15,6) [circle,draw] {$1$};
\node (pppm) at (-11,6) [circle,draw] {$1$};
\node (ppmp) at ( -7,6) [circle,draw] {$1$};
\node (ppmm) at ( -3,6) [circle,draw] {$1$};
\node (pmpp) at (  1,6) [circle,draw] {$2$};
\node (pmpm) at (  5,6) [circle,draw] {$2$};
\node (pmmp) at (  9,6) [circle,draw] {$1$};
\node (pmmm) at ( 13,6) [circle,draw] {$1$};

\draw [-] (ppp.south) -- (pppp.north) node [left,midway]  {$+$};
\draw [-] (ppp.south) -- (pppm.north) node [right,midway]  {$-$};
\draw [-] (ppm.south) -- (ppmp.north) node [left,midway]  {$+$};
\draw [-] (ppm.south) -- (ppmm.north) node [right,midway]  {$-$};
\draw [-] (pmp.south) -- (pmpp.north) node [left,midway]  {$+$};
\draw [-] (pmp.south) -- (pmpm.north) node [right,midway]  {$-$};
\draw [-] (pmm.south) -- (pmmp.north) node [left,midway]  {$+$};
\draw [-] (pmm.south) -- (pmmm.north) node [right,midway]  {$-$};

\node (ppppp) at (-16,3) [circle,draw] {$1$};
\node (ppppm) at (-14,3) [circle,draw] {$1$};
\node (pppmp) at (-12,3) [circle,draw] {$1$};
\node (pppmm) at (-10,3) [circle,draw] {$1$};
\node (ppmpp) at ( -8,3) [circle,draw] {$2$};
\node (ppmpm) at ( -6,3) [circle,draw] {$2$};
\node (ppmmp) at ( -4,3) [circle,draw] {$1$};
\node (ppmmm) at ( -2,3) [circle,draw] {$1$};
\node (pmppp) at (  0,3) [circle,draw] {$2$};
\node (pmppm) at (  2,3) [circle,draw] {$2$};
\node (pmpmp) at (  4,3) [circle,draw] {$2$};
\node (pmpmm) at (  6,3) [circle,draw] {$2$};
\node (pmmpp) at (  8,3) [circle,draw] {$2$};
\node (pmmpm) at ( 10,3) [circle,draw] {$2$};
\node (pmmmp) at ( 12,3) [circle,draw] {$1$};
\node (pmmmm) at ( 14,3) [circle,draw] {$1$};

\draw [-] (pppp.south) -- (ppppp.north) node [left,midway]  {$+$};
\draw [-] (pppm.south) -- (pppmm.north) node [right,midway]  {$-$};
\draw [-] (ppmp.south) -- (ppmpp.north) node [left,midway]  {$+$};
\draw [-] (ppmm.south) -- (ppmmm.north) node [right,midway]  {$-$};
\draw [-] (pmpp.south) -- (pmppp.north) node [left,midway]  {$+$};
\draw [-] (pmpm.south) -- (pmpmm.north) node [right,midway]  {$-$};
\draw [-] (pmmp.south) -- (pmmpp.north) node [left,midway]  {$+$};
\draw [-] (pmmm.south) -- (pmmmm.north) node [right,midway]  {$-$};

\draw [-] (pppm.south) -- (pppmp.north) node [left,midway]  {$+$};
\draw [-] (pppp.south) -- (ppppm.north) node [right,midway]  {$-$};
\draw [-] (ppmm.south) -- (ppmmp.north) node [left,midway]  {$+$};
\draw [-] (ppmp.south) -- (ppmpm.north) node [right,midway]  {$-$};
\draw [-] (pmpm.south) -- (pmpmp.north) node [left,midway]  {$+$};
\draw [-] (pmpp.south) -- (pmppm.north) node [right,midway]  {$-$};
\draw [-] (pmmm.south) -- (pmmmp.north) node [left,midway]  {$+$};
\draw [-] (pmmp.south) -- (pmmpm.north) node [right,midway]  {$-$};
\node (pppppx) at (-16,0) [circle,draw] {$1$};
\node (ppppmx) at (-14,0) [circle,draw] {$1$};
\node (pppmpx) at (-12,0) [circle,draw] {$2$};
\node (pppmmx) at (-10,0) [circle,draw] {$1$};
\node (ppmppx) at ( -8,0) [circle,draw] {$3$};
\node (ppmpmx) at ( -6,0) [circle,draw] {$2$};
\node (ppmmpx) at ( -4,0) [circle,draw] {$2$};
\node (ppmmmx) at ( -2,0) [circle,draw] {$1$};
\node (pmpppx) at (  0,0) [circle,draw] {$2$};
\node (pmppmx) at (  2,0) [circle,draw] {$2$};
\node (pmpmpx) at (  4,0) [circle,draw] {$3$};
\node (pmpmmx) at (  6,0) [circle,draw] {$2$};
\node (pmmppx) at (  8,0) [circle,draw] {$2$};
\node (pmmpmx) at ( 10,0) [circle,draw] {$2$};
\node (pmmmpx) at ( 12,0) [circle,draw] {$2$};
\node (pmmmmx) at ( 14,0) [circle,draw] {$1$};

\draw [-] (ppppp.south) -- (pppppx.north) node [left,midway]  {$\pm$};
\draw [-] (ppppm.south) -- (ppppmx.north) node [left,midway]  {$\pm$};
\draw [-] (pppmp.south) -- (pppmpx.north) node [left,midway]  {$\pm$};
\draw [-] (pppmm.south) -- (pppmmx.north) node [left,midway]  {$\pm$};
\draw [-] (ppmpp.south) -- (ppmppx.north) node [left,midway]  {$\pm$};
\draw [-] (ppmpm.south) -- (ppmpmx.north) node [left,midway]  {$\pm$};
\draw [-] (ppmmp.south) -- (ppmmpx.north) node [left,midway]  {$\pm$};
\draw [-] (ppmmm.south) -- (ppmmmx.north) node [left,midway]  {$\pm$};
\draw [-] (pmppp.south) -- (pmpppx.north) node [left,midway]  {$\pm$};
\draw [-] (pmppm.south) -- (pmppmx.north) node [left,midway]  {$\pm$};
\draw [-] (pmpmp.south) -- (pmpmpx.north) node [left,midway]  {$\pm$};
\draw [-] (pmpmm.south) -- (pmpmmx.north) node [left,midway]  {$\pm$};
\draw [-] (pmmpp.south) -- (pmmppx.north) node [left,midway]  {$\pm$};
\draw [-] (pmmpm.south) -- (pmmpmx.north) node [left,midway]  {$\pm$};
\draw [-] (pmmmp.south) -- (pmmmpx.north) node [left,midway]  {$\pm$};
\draw [-] (pmmmm.south) -- (pmmmmx.north) node [left,midway]  {$\pm$};
\end{tikzpicture}
\caption{Nilpotence degree of diagram demipotents.  The root node denotes the diagram demipotent with empty diagram (the identity).  In all computed example, sibling diagram demipotents have the same nilpotence degree; the lowest row has been abbreviated accordingly for readability.}
\label{fig:nilpotence}
\end{figure}

The diagram demipotent $C_{+\cdots +}$ with all nodes positive is given by the longest word in the $0$-Hecke monoid, and is thus already idempotent.  The same is true of the diagram demipotent $C_{-\cdots -}$ with all nodes negative.  As such, both of these elements have nilpotence degree $1$.

\begin{lemma}
The nilpotence degree of sibling diagram demipotents $C_{D+}$ and $C_{D-}$ are either equal to or one greater than the nilpotence degree $k$ of the parent $C_{D}$.  Furthermore, the nilpotence degree of sibling diagram demipotents are equal.
\end{lemma}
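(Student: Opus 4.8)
The plan is to leverage Proposition~\ref{thm:algebra} together with the recursive structure $C_{D+}=L_D\pi_N R_D$, $C_{D-}=L_D\bpi{N}R_D$, and the Sibling Rivalry Lemma. Write $p=C_D$, $x=C_{D+}$, $y=C_{D-}$, and let $k$ be the nilpotence degree of $p$, so $p^k$ is idempotent and $p^k=p^{k+1}$. I want to show $x^{k+1}=x^{k+2}$ (so the nilpotence degree of $x$ is at most $k+1$), that it is at least $k$, and that the nilpotence degrees of $x$ and $y$ coincide.

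First, the upper bound $k+1$. Since $p^k=p^{k+1}$, the operator $p^k$ is the idempotent projecting onto the $1$-eigenspace of $p$ along its $0$-generalized-eigenspace. By Sibling Rivalry, $C_D C_{D+}=C_{D+}C_D=C_{D+}^2$, hence inductively $p^j x = x^{j+1}$ for all $j\geq 1$; in particular $p^k x = x^{k+1}$ and $p^{k+1}x = x^{k+2}$. From $p^k=p^{k+1}$ we get $x^{k+1}=x^{k+2}$, so $x^{k+1}$ is idempotent and the nilpotence degree of $x$ is $\leq k+1$. The same argument with $y$ in place of $x$ gives nilpotence degree of $y$ is $\leq k+1$. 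For the lower bound: if the nilpotence degree of $x$ were some $\ell<k$, then $x^\ell=x^{\ell+1}$; I would feed this into Proposition~\ref{thm:algebra}(1)–(5) applied around the node $D$ to derive that $p$ already becomes idempotent by power $\ell$ — concretely, using that $p=x+y$, $xy=yx=0$, so $p^j=x^j+y^j$ for all $j\geq 1$, and the eigenvalue structure forces the nilpotence degree of $p$ to be $\max$ of those of $x$ and $y$. Thus nilpotence degree of $p$ equals $\max(\ell_x,\ell_y)$ where $\ell_x,\ell_y$ are the nilpotence degrees of $x,y$; combined with the upper bound $\ell_x,\ell_y\leq k+1$ this gives $\ell_x,\ell_y\in\{k,k+1\}$ and at least one of them equals $k$.

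The remaining and main point is that $\ell_x=\ell_y$ — that siblings have \emph{equal} nilpotence degree, not merely degrees differing by the branching step. The natural tool is the automorphism $\Psi$: $\Psi(C_{D+})=C_{\hat D-}$ and $\Psi(C_{D-})=C_{\hat D+}$, where $\hat D$ is $D$ with all signs flipped. Since $\Psi$ is an algebra automorphism it preserves nilpotence degree, so $\ell_{C_{D+}}=\ell_{C_{\hat D-}}$ and $\ell_{C_{D-}}=\ell_{C_{\hat D+}}$. This alone does not identify $\ell_{C_{D+}}$ with $\ell_{C_{D-}}$; one also needs that flipping \emph{all} signs of a diagram preserves the nilpotence degree, i.e. $\ell_{C_E}=\ell_{C_{\hat E}}$ for every $E$, which would then give $\ell_{C_{D+}}=\ell_{C_{\hat D-}}=\ell_{C_{(D-)\widehat{\phantom{x}}}}$... — and this requires an independent symmetry argument. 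I expect this to be the hard part. The cleanest route I see is to combine $\Psi$ with the Dynkin diagram automorphism $\tau:\pi_i\mapsto\pi_{N-i}$: the composite $\tau\circ\Psi$ (or a left-right reversal sending $C_D\mapsto C_D'$) should relate $C_{D+}$ and $C_{D-}$ up to an automorphism, forcing equal nilpotence degrees. Failing a clean automorphism identification, the fallback is to analyze the two generalized $0$-eigenspaces directly via the triangularity of Corollary~\ref{cor:triangularity}: the nilpotence degree of $C_{D\pm}$ equals the largest Jordan block at eigenvalue $0$ in the lower-triangular matrix $M_{D\pm}$, and one shows the Jordan structures at $0$ for $x$ and $y$ are carried into each other under the involution swapping the relevant basis monoids. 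I would first pursue the automorphism argument and only resort to the matrix analysis if no single automorphism does the job.
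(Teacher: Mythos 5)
Your upper bound argument is correct and is essentially the paper's: Sibling Rivalry gives $C_DC_{D+}=C_{D+}^2$, hence $C_D^jC_{D+}=C_{D+}^{j+1}$, so $C_D^k=C_D^{k+1}$ forces $C_{D+}^{k+1}=C_{D+}^{k+2}$. The rest of the proposal has a genuine gap. Your key claim for the lower bound --- that ``the eigenvalue structure forces the nilpotence degree of $p$ to be $\max$ of those of $x$ and $y$'' --- is false, and the paper's own data refutes it: $C_{+-+}$ is idempotent (nilpotence degree $1$) while both of its children $C_{+-++}$ and $C_{+-+-}$ have nilpotence degree $2$ (Figure~\ref{fig:nilpotence}). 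Abstractly, $xy=yx=0$ gives $p^j=x^j+y^j$ and hence $\ell_p\le\max(\ell_x,\ell_y)$, but nothing forces the reverse inequality, since the nilpotent parts of $x$ and $y$ can cancel in the sum; and even if the max formula held, it would only show that \emph{one} sibling has degree $\ge k$, not both. The ingredient you are missing is the pair of evaluation maps $\Phi_N^{\pm}:\mathbb{C}H_0(S_N)\to\mathbb{C}H_0(S_{N-1})$ sending $\pi_N\mapsto 1$ (resp.\ $\pi_N\mapsto 0$): these are algebra morphisms satisfying $\Phi_N^+(C_{D+})=C_D$ and $\Phi_N^-(C_{D-})=C_D$, so $x^{\ell}=x^{\ell+1}$ with $\ell<k$ would push down to $p^{\ell}=p^{\ell+1}$, contradicting the definition of $k$; this is how the paper gets $\ell_x,\ell_y\ge k$.

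For the equality of sibling degrees you propose an automorphism argument that you yourself observe does not close ($\Psi$ relates $C_{D+}$ to the demipotent of a \emph{different} diagram $\hat D-$, and the composite with the Dynkin involution produces the opposite demipotents $C_D'$, not the sibling), and the ``fallback'' Jordan-block analysis is not substantiated. The actual step is a short computation from pieces you already have: since $p^k=p^{k+1}$ and $p^j=x^j+y^j$, if $x^k=x^{k+1}$ then $x^k+y^k=x^{k+1}+y^{k+1}$ immediately gives $y^k=y^{k+1}$; combined with the lower bound this shows $\ell_x=k$ implies $\ell_y=k$, and if neither sibling has degree $k$ then both have degree $k+1$. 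No symmetry of the diagram is needed.
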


\begin{proof}
Let $x$ and $y$ be the sibling diagram demipotents, with parent diagram demipotent $p$, so $p=C_{D}=L_DR_D, x=C_{D+}=L_D\pi_NR_D, y=C_{D-}=L_D(1-\pi_N)R_D$.  Let $p$ have nilpotence degree $k$, so that $p^k=p^{k+1}$.  We have already seen that the nilpotence degree of $x$ and $y$ is at most $k+1$.  We first show that the nilpotence degree of $x$ or $y$ cannot be less than the nilpotence degree of $p$.  

Recall the following quotients of $\mathbb{C}H_0(S_N)$: 
\begin{eqnarray*}
\Phi_N^+ &:& \mathbb{C}H_0(S_N) \rightarrow \mathbb{C}H_0(S_{N-1}) \\
\Phi_N^+(\pi_i) &=&     \begin{cases}
      1          & \text{if $i=N$,}\\
      \pi_i & \text{if $i \neq N$.}
    \end{cases}\\
\Phi_N^- &:& \mathbb{C}H_0(S_N) \rightarrow \mathbb{C}H_0(S_{N-1}) \\
\Phi_N^-(\pi_i) &=&     \begin{cases}
      0          & \text{if $i=N$,}\\
      \pi_i & \text{if $i \neq N$.}
    \end{cases}
\end{eqnarray*}
given by introducing the relation $\pi_N=1$.  One can easily check that these are both morphisms of algebras.  
Notice that $\Phi_N^+(x)=p$, and $\Phi_N^-(y)=p$.  Then if the nilpotence degree of $x$ is $l<k$, we have 
$p^l=\Phi_N^+(x^l)=\Phi_N^+(x^{l+1})=p^{l+1}$, implying that the nilpotence degree of $p$ was actually $l$, a contradiction.  The same argument can be applied to $y$ using the quotient $\Phi_n^-$.

Suppose one of $x$ and $y$ has nilpotence degree $k$. Assume it is $x$ without loss of generality.  Then:
\begin{eqnarray*}
p^k & = & p^{k+1} \\
\Leftrightarrow x^k+y^k & = & x^{k+1}+y^{k+1} \\
\Leftrightarrow x^{k+1}+y^k & = & x^{k+1}+y^{k+1} \\
\Leftrightarrow y^k & = & y^{k+1} \\
\end{eqnarray*}  
Then the nilpotence degree of $y$ is also $k$.

Finally, if neither $x$ nor $y$ have nilpotence degree $k$, then they both must have nilpotence degree $k+1$.
\end{proof}

Computer exploration suggests that siblings always have equal nilpotence degree, and that nilpotence degree either stays the same or increases by one after each branching.

\begin{lemma}
Let $D$ be a signed diagram with a single sign change, or the sibling of such a diagram.  Then $C_D$ is idempotent (and thus has nilpotence degree $1$).
\end{lemma}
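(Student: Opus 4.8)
\section*{Proof proposal}

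My plan is to push the whole statement onto one explicit identity. The ``sibling'' half follows for free once the single-sign-change half is known: if $D$ is the sibling of a diagram with exactly one sign change, then by the previous lemma (siblings have equal nilpotence degree) $C_D$ has the same nilpotence degree as that diagram's demipotent, and a demipotent of nilpotence degree $1$ is idempotent. So I would assume $D$ has exactly one sign change, and apply the involutive algebra automorphism $\Psi$ (which sends $C_D$ to $C_{\hat D}$, with $\hat D$ the sign-reversal of $D$, and which preserves idempotency) to reduce to the case where the change is from $+$ to $-$. Then $D=+^a-^b$ with $a,b\ge1$, and the associated composition is $P_1\cup P_2$ with $P_1=\{1,\dots,a\}$, $P_2=\{a+1,\dots,N-1\}$, so that $P_1\sqcup P_2=I$. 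Unwinding Definition~\ref{def:ddemipotents} and using $(w_{P_2}^-)^2=w_{P_2}^-$ (the $\bpi{i}$ obey the $0$-Hecke relations) gives
\[
C_D=L_DR_D=\bigl(w_{P_1}^+w_{P_2}^-\bigr)\bigl(w_{P_2}^-w_{P_1}^+\bigr)=w_{P_1}^+\,w_{P_2}^-\,w_{P_1}^+ .
\]
Write $e=w_{P_1}^+$ and $f=w_{P_2}^-$; both are idempotent, $C_D=efe$, and $R_D=fe$ is exactly the Norton generator $w_J^-w_{\hat J}^+$ for $J=P_2$.

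Next I would observe that it suffices to prove $R_D=fe$ is idempotent: from $fefe=fe$ and $e^2=e$ one gets $C_D^2=(efe)^2=efefe=e(fefe)=e(fe)=efe=C_D$. (As a bonus this shows that for a two-interval partition of $I$, Norton's generator is already a primitive idempotent.) To prove $(fe)^2=fe$, I would use Lemma~\ref{lem:longWords} in the $\bpi{i}$-generators to write $f=g\,\bpi{a+1}\,g$ with $g=w_{\{a+2,\dots,N-1\}}^-$, noting that $g$ commutes with $e$ (their generator indices differ by at least $2$) and that $g^2=g$. Moving the two $e$'s past the adjacent $g$'s and collapsing $gg=g$ then gives
\[
(fe)^2=fefe=f\,(e\,\bpi{a+1}\,e)\,g .
\]
By Lemma~\ref{lem:longWords} again, $e\,\pi_{a+1}\,e=w_{P_1}^+\pi_{a+1}w_{P_1}^+=w_{\{1,\dots,a+1\}}^+$; call this $\widehat e$. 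Hence $e\,\bpi{a+1}\,e=e-\widehat e$ and $(fe)^2=feg-f\widehat e\,g$. Finally $feg=(fg)e=fe$, because $e$ commutes with $g$ and $fg=f$ (as $g$ is a product of $\bpi{i}$ with $i\in P_2$, each of which $f$ absorbs on the right); and $f\widehat e=f\,\bpi{a+1}\,\widehat e=0$, because $f\bpi{a+1}=f$ while $\pi_{a+1}$ is a left descent of $\widehat e$, so $\bpi{a+1}\widehat e=(1-\pi_{a+1})\widehat e=0$. Therefore $(fe)^2=fe$, and $C_D=efe$ is idempotent.

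The only delicate step is the middle one: rewriting $fefe$ as $f\,(e\,\bpi{a+1}\,e)\,g$ and then recognizing the two surviving products via Lemma~\ref{lem:longWords}; everything else is descent/absorption bookkeeping. The pitfall to guard against is losing track of the fact that the \emph{only} non-commuting interaction between the two blocks sits at the single generator pair $\pi_a,\pi_{a+1}$, so the manipulation should be organized around ``$e$ and $g$ commute'' and ``everything happens at $\bpi{a+1}$''. If that bookkeeping feels error-prone, a safe alternative is to induct on $b$, peeling the outermost generator off $w_{P_2}^-$ through Lemma~\ref{lem:longWords} at each stage; the pairwise-cancellation mechanism of Lemma~\ref{thm:sibRiv} does not appear to be needed here, since the expansion above already collapses without it.
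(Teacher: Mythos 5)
Your proof is correct. It uses the same two ingredients as the paper's argument---Lemma~\ref{lem:longWords} to recognize the longest elements created at the interface generator, and the annihilation that occurs when a $\pi$-descent meets a $\bpi{}$-descent at that generator---but it packages them differently. The paper keeps $C_D = w_L^- w_H^+ \pi_i w_H^+ w_L^-$ intact, introduces the complementary element $y = w_L^- w_H^+ (1-\pi_i) w_H^+ w_L^-$ so that $p = C_D + y = w_H^+ w_L^-$ is a visibly idempotent element with $C_D p = C_D$, and then kills $C_D y$ by collapsing $(w_H^+\pi_i w_H^+)(w_L^-\bpi{i}w_L^-) = w_{H\cup i}^+ w_{L\cup i}^- = 0$; idempotency follows from $C_D = C_D(C_D+y)$. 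You instead reduce to showing that the half-product $R_D = fe$ is idempotent and verify $(fe)^2 = fe$ by expanding $f = g\,\bpi{a+1}\,g$ and isolating $e\,\bpi{a+1}\,e = e - \widehat e$. Your route has the small bonus of showing that Norton's generator $w_J^- w_{\hat J}^+$ is already idempotent when the partition of $I$ consists of two consecutive blocks, while the paper's route exhibits $C_D$ and $y$ as orthogonal idempotents summing to $w_H^+w_L^-$, echoing the sibling-rivalry structure used elsewhere. All the absorption facts you invoke ($eg=ge$, $fg=f$, $f\bpi{a+1}=f$, $\bpi{a+1}\widehat e = 0$) are justified by the descent properties recalled in Section~\ref{sec:bg}, and the edge case where the negative block is a single node (so $g$ is the identity) goes through unchanged.
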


\begin{proof}
We prove the statement for a diagram with single sign change, since siblings automatically have the same nilpotence degree.
Without loss of generality let the diagram of $D$ be 
$--\cdots--++\cdots++$.  Let $L$ the subset of the index set with negative marks in $D$.  Let $i$ be the minimal element of the index set with a positive mark, and let $H=I \setminus L, i$.  Then:
\[
C_D= w_L^- w_H^+ \pi_i w_H^+ w_L^-.
\]
Notice that $w_H^+$ and $w_L^-$ commute.

Set $y=w_L^- w_H^+ (1-\pi_i) w_H^+ w_L^-$, 
and $p = C_D+y = w_L^- w_H^+ w_H^+ w_L^- = w_H^+ w_L^-$.

Now $y$ is not a diagram demipotent, though $p$ could be considered a diagram demipotent for disconnected Dynkin Diagram with the $i$th node removed.

It is immediate that:
\[
p^2=p, \qquad C_Dp=C_D=pC_D \qquad yp=y=py
\]

Now we can establish orthogonality of $C_D$ and $y$:
\begin{eqnarray*}
C_Dy & = & (w_L^- w_H^+ \pi_i w_H^+ w_L^-) (w_L^- w_H^+ (1-\pi_i) w_H^+ w_L^-) \\
& = & w_L^-( w_H^+ \pi_i w_H^+)(w_L^- (1-\pi_i)w_L^-) w_H^+\\
& = & w_L^- w_{H\cup i}^+ w_{L\cup i}^- w_H^+ \\
& = & 0
\end{eqnarray*}
The product of $w_{H\cup i}^+$ and $w_{L\cup i}^-$ is zero, since $w_{H\cup i}^+$ has a $\pi_i$ descent, and 
$w_{L\cup i}^-$ has a $\barp_i$ descent.

Then $C_D = pC_D = (C_D + y)C_D = (C_D)^2$, so we see that $C_D$ is idempotent.
\end{proof}

In particular, this lemma is enough to see why there is no nilpotence before $N=5$; every signed Dynkin diagrams with three or fewer nodes has no sign change, one sign change, or is the sibling of a diagram with one sign change.

\begin{proposition}
Let $D$ be any signed diagram with $n$ nodes, and let $E$ be the largest prefix diagram such that $E$ has a single sign change, or is the sibling of a diagram with a single sign change.  Then if $E$ has $k$ nodes, the nilpotence degree of $D$ is at most $n-k$.
\end{proposition}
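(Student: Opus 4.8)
The plan is to bootstrap from the single-sign-change lemma at the prefix $E$ and climb the chain of prefixes $E=D_k\subseteq D_{k+1}\subseteq\cdots\subseteq D_n=D$, controlling the growth of the nilpotence degree one branching at a time with the lemma on siblings proved just above. First I would dispose of the degenerate case: if $D$ has no sign change at all, then $C_D$ is the longest element in its generators, hence already idempotent (nilpotence degree $1$), and there is nothing to prove. So I may assume $D$ has a sign change and take $E=D_k$ to be the largest prefix with a single sign change, writing $E=a^pb^q$ with $a\neq b$, $p,q\ge 1$, $p+q=k$. By the single-sign-change lemma, $C_E$ is idempotent.

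The key step — and the one I expect to carry the weight of the argument — is extracting the right consequence of the maximality of $k$. The $(k+1)$st letter of $D$ cannot equal $b$, since then $D_{k+1}=a^pb^{q+1}$ would again have a single sign change, contradicting maximality; hence the $(k+1)$st letter is $\bar b=a$, so $D_{k+1}=a^pb^qa$. But $a^pb^qa$ is precisely the sibling of the single-sign-change diagram $a^pb^{q+1}$ (flip the last sign), so the single-sign-change lemma, in the form covering siblings, gives that $C_{D_{k+1}}$ is idempotent as well, i.e.\ has nilpotence degree $1$. (If $k=n$ this step is vacuous: $D=E$ and $C_D$ is already idempotent.)

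It then remains only to read off the bound. From $D_{k+1}$ up to $D_n=D$ there are $n-k-1$ successive branchings, and the lemma on siblings states that the nilpotence degree of a child diagram demipotent never exceeds that of its parent by more than $1$. Starting from nilpotence degree $1$ at $C_{D_{k+1}}$ and iterating along this chain, we obtain that the nilpotence degree of $C_D$ is at most $1+(n-k-1)=n-k$, as claimed.

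The argument is essentially bookkeeping once the maximality observation is in hand; the single genuine subtlety is that observation, which is also the reason the earlier lemma was phrased to include siblings of single-sign-change diagrams rather than just single-sign-change diagrams themselves. As a consistency check one can note that the bound is attained for diagrams such as $+-++$ (nilpotence degree $2$, with $E=+-$), while for diagrams with many sign changes it is typically not tight.
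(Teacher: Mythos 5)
Your proof is correct, and it uses the same two ingredients as the paper's one-line argument (the lemma that single-sign-change diagrams and their siblings give idempotents, and the lemma that nilpotence degree grows by at most one per branching). But you have supplied a step that the paper's proof actually needs and omits. Knowing only that the $k$-node prefix $E$ is idempotent, the branching lemma applied $n-k$ times yields the bound $1+(n-k)=n-k+1$, not $n-k$. Your maximality observation --- that the $(k+1)$st sign must differ from the $k$th, so that $D_{k+1}=a^pb^qa$ is the \emph{sibling} of the single-sign-change diagram $a^pb^{q+1}$ and is therefore itself idempotent by the same lemma --- is precisely what absorbs the extra $+1$. That is the real content of the proposition, and your write-up makes it explicit where the paper's ``follows directly'' does not.

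One caveat is worth recording. You take $E$ to be the largest prefix \emph{with a single sign change}, whereas the proposition as written takes the largest prefix that has a single sign change \emph{or} is the sibling of such a diagram. These readings differ, and the difference matters: for $D=+-++$ the literal reading gives $E=+-+$ (the sibling of $+--$), hence $k=3$ and a claimed bound of $n-k=1$, while the true nilpotence degree is $2$ (as the paper's own table shows). So under the literal wording the stated bound fails, and your silent substitution of the smaller $E$ is in fact the reading that makes the proposition true --- it also matches the sharp case $++-+++$ with $E=++-$, $k=3$, bound $3$. Similarly, when $D$ itself has a single sign change your bound degenerates to $n-k=0$ against an actual nilpotence degree of $1$; you dismiss this case as vacuous, and it should at least be flagged as a boundary convention. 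Both issues are defects of the statement rather than of your argument, but a complete write-up should say which version of $E$ is being proved.
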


\begin{proof}
This result follows directly from the previous lemma and the fact that the nilpotence degree can increase by at most one with each branching.
\end{proof}

This bound is not quite sharp for $H_0(S_N)$ with $N\leq 7$:  The diagrams $+-++$, $+-+++$, and $+-++++$ all have nilpotence degree $2$.  However, at $N=7$, the highest expected nilpotence degree is $3$ (since every diagram demipotent with three or fewer nodes is idempotent), and this degree is attained by $4$ of the demipotents.  These diagram demipotents are $++-+++$, $+-+-++$, and their siblings.

An open problem is to find a formula for the nilpotence degree directly in terms of the diagram of a demipotent.

\section{Further Directions}
\label{sec:quest}

\subsection{Conjectural Demipotents with Simpler Expression}

Computer exploration has suggested a collection of demipotents that are simpler to describe than those we have presented here.  

For a word $w=w_1w_2\cdots w_k$ and a signed diagram $D$, we obtain the \emph{masked word} $w^D$ by applying the sign of $i$ in $D$ to each instance of $i$ in $w$.  For example, for the word $w=(1,2,1,3,1,2)$ and $D=+-+$, the masked word is $w^D=(1,-2,1,3,1,-2)$.  A masked word yields an element of $H_0(S_N)$ in the obvious way: we write 
\[
\pi_w^D:=\prod \pi_{w_i}^{sgn(i)},
\]
where $sgn(i)$ is the sign of $i$ in $D$.  

Some masked words are demipotent and otherare not.  We call a word \emph{universal} if:
\begin{itemize}
\item $w$ contains every letter in $I$ at least once, and
\item $w^D$ is demipotent for every signed diagram $D$.
\end{itemize} 

\begin{conjecture}
\label{conj:unverisality}
The word $u_N=(1,2,\ldots, N-2, N-1, N-2, \ldots, 2,1)$ is universal.
\end{conjecture}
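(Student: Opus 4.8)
The first defining property of a universal word is immediate for $u_N=(1,2,\dots,N-1,N-2,\dots,2,1)$: every element of $I=\{1,\dots,N-1\}$ occurs. So the whole content of Conjecture~\ref{conj:unverisality} is that $\pi_{u_N}^D$ is demipotent for every signed diagram $D$ on $N-1$ nodes, and the plan is an induction on $N$ modelled on Section~\ref{sec:bra}. Two structural observations drive it. First, since $u_N$ is a palindromic word and the mask $D$ assigns a sign to a \emph{letter} (so reversal does not change it), $\pi_{u_N}^D$ is a palindrome in the alphabet $\{\pi_i^{\pm}\}$ for every $D$, hence is fixed by the length-preserving anti-automorphism $\pi_\sigma\mapsto\pi_{\sigma^{-1}}$. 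Second, $\pi_{u_N}^D$ is a product of $\pi_i$'s and $(1-\pi_i)$'s, so the proof of Corollary~\ref{cor:triangularity} applies unchanged: in the $\pi$-basis of $\mathbb{C}H_0(S_N)$ ordered by a linear extension of left weak order, the matrix of $\pi_{u_N}^D$ is triangular with diagonal entries in $\{0,1\}$. Consequently $\pi_{u_N}^D=E+M$ with $E$ idempotent, $M$ nilpotent and $EM=ME$ (abstract Jordan decomposition), and demipotence is equivalent to the statement that the generalized $1$-eigenspace is an honest eigenspace, i.e.\ $EM=0$.

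Write $t_i:=\pi_i^{\mathrm{sgn}_D(i)}$, so that $\pi_{u_N}^D=(t_1t_2\cdots t_{N-1})(t_{N-2}\cdots t_2t_1)$. Using $t_{N-2}t_{N-2}=t_{N-2}$ one gets, for any diagram $D'$ on $N-2$ nodes, the sibling-sum identity $\pi_{u_N}^{D'+}+\pi_{u_N}^{D'-}=\pi_{u_{N-1}}^{D'}$, and moreover the algebra morphism of Section~\ref{sec:nilp} sending $\pi_{N-1}\mapsto 1$ carries $\pi_{u_N}^{D'+}$ to $\pi_{u_{N-1}}^{D'}$ and kills $\pi_{u_N}^{D'-}$ (and symmetrically for $\pi_{N-1}\mapsto 0$); thus $\pi_{u_N}^{D}$ sits over $\pi_{u_{N-1}}^{D'}$ exactly as $C_D$ sits over $C_{D'}$. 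Set $p=\pi_{u_{N-1}}^{D'}$, $x=\pi_{u_N}^{D'+}$, $y=\pi_{u_N}^{D'-}$, so $p=x+y$. I claim it suffices to prove the single identity $px=x^2$: then $yx=(p-x)x=0$; applying the anti-automorphism (which fixes both $p$ and $x$) turns $px=x^2$ into $xp=x^2$, whence $xy=x(p-x)=0$; and with $xy=yx=0$ in hand, Proposition~\ref{thm:algebra}, its corollary, and the argument of Theorem~\ref{thm:main} go through verbatim with $(p,x,y)$ in place of $(C_D,C_{D+},C_{D-})$, since those proofs use only $p=x+y$, $xy=yx=0$, and the inductive good structure of $p$. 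This yields that $\pi_{u_N}^{D}$ is demipotent (nilpotence degree at most $N-1$), and the induction closes with the base cases $N\le 3$, where each $\pi_{u_N}^D$ is checked idempotent directly (e.g.\ $\pi_1(1-\pi_2)\pi_1=\pi_1-\pi_{121}$ squares to itself).

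It remains to prove $px=x^2$ for $x=\pi_{u_N}^{D'+}$ and arbitrary $D'$. The natural approach is to imitate the two-levels-of-branching computation in the proof of Lemma~\ref{thm:sibRiv}. Write the great-grandparent as $r=\pi_{u_{N-3}}^{D'''}=LR$ with $L=t_1\cdots t_{N-4}$ and $R=t_{N-4}\cdots t_1$; then $t_{N-2}$ and $t_{N-1}$ commute with $L$ and $R$, the grandparent is $q=\pi_{u_{N-2}}^{D''}=L\,t_{N-3}\,R$, the parent is $p=L\,(t_{N-3}t_{N-2}t_{N-3})\,R$, and $x=L\,(t_{N-3}t_{N-2}t_{N-1}t_{N-2}t_{N-3})\,R$, while the inductive hypothesis (sibling orthogonality one level down) supplies $pq=qp=p^2$, $\bar p q=q\bar p=\bar p^2$ and $p\bar p=\bar p p=0$ for the parent's sibling $\bar p=q-p$. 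One would then express $px$ and $x^2$ entirely in terms of $q$, $\bar p$, $t_{N-2}$, $t_{N-1}$ and check the resulting polynomial identity term by term, exactly as $\bar p\bar x=\bar x^2$ is verified in Lemma~\ref{thm:sibRiv}.

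The hard part is precisely the point where that analogy breaks. In Lemma~\ref{thm:sibRiv} the key intermediate identities (such as $p=\pi_b\,q\,\pi_b$) rest on braid moves $\pi_a\pi_b\pi_a=\pi_b\pi_a\pi_b$, and the $0$-Hecke braid relation holds for the family $\{\pi_i\}$ and for the family $\{1-\pi_i\}$, but \emph{not} for a mixed triple: already for two generators $\pi_1(1-\pi_2)\pi_1=\pi_1-\pi_{121}$ differs from $(1-\pi_2)\pi_1(1-\pi_2)=\pi_1-\pi_{12}-\pi_{21}+\pi_{121}$. Hence when the signs of nodes $N-2$ and $N-3$ disagree one cannot write $p=t_{N-2}\,q\,t_{N-2}$, and the staircase building blocks $t_1\cdots t_k$ do not collapse to longest elements the way $w_J^{\pm}$ do under Lemma~\ref{lem:longWords}. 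Getting past this will need a genuinely new ingredient — either a refined calculus for the products $t_1\cdots t_k$ under mixed signs, identifying the correct substitutes for the braid-based identities and showing the mixed-sign discrepancies are absorbed by the outer $L,R$ inside the difference $px-x^2$, or a different route to demipotence altogether, for instance bounding the sizes of the $1$-Jordan blocks of $\pi_{u_N}^D$ from a direct analysis of its action on the monoid basis.
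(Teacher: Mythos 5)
The statement you are addressing is presented in the paper as Conjecture~\ref{conj:unverisality}: the paper offers no proof, only computer verification up to $\mathbb{C}H_0(S_9)$ and a proof of the analogous claim in the quotient $NDPF_N$. So there is no proof in the paper to compare against, and your attempt must stand on its own. It does not. You reduce everything to the single identity $px=x^2$, which is equivalent to $yx=0$, i.e.\ to sibling rivalry for the masked words, and you honestly flag that the braid-based manipulations of Lemma~\ref{thm:sibRiv} break down under mixed signs. But the obstruction is more fundamental than a computation you might engineer around: the paper explicitly reports that the elements $u_N^D$ ``fail to have the sibling rivalry property.'' In other words the identity you propose as your sufficient condition is simply false for some diagrams $D$, so no refined calculus for the mixed-sign staircases $t_1\cdots t_k$ will recover it, and the reduction ``it suffices to prove $px=x^2$'' is a dead end rather than an incomplete step. (Your derivation of $xy=0$ from $yx=0$ via the palindrome anti-automorphism only compounds this: if either half of sibling rivalry fails, the whole inductive scaffold of Proposition~\ref{thm:algebra} and Theorem~\ref{thm:main} is unavailable.)

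Several pieces of your argument are nonetheless correct and worth retaining: the trivial verification of the first universality condition; the sibling-sum identity $\pi_{u_N}^{D'+}+\pi_{u_N}^{D'-}=\pi_{u_{N-1}}^{D'}$ via $t_{N-2}^2=t_{N-2}$; the compatibility with the evaluation maps $\Phi_{N-1}^{\pm}$; and the triangularity observation that every $\pi_{u_N}^{D}$ has eigenvalues in $\{0,1\}$, so that demipotence is precisely the absence of nontrivial Jordan blocks at the eigenvalue $1$. That reformulation points toward the route the paper itself suggests when it says ``another method should be found,'' and your closing alternative --- controlling the $1$-Jordan blocks by a direct analysis of the action on the monoid basis --- is the more promising direction. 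As written, however, the proposal does not prove the conjecture, and its central strategy cannot be repaired.
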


Computer exploration has shown that $u_N$ are universal up to $\mathbb{C}H_0(S_9)$, and that the idempotents thus obtained are the same as the idempotents obtained from the diagram demipotents $C_D$.  However, these demipotents $u^D_N$, though they branch in the same way as the diagram demipotents, fail to have the sibling rivalry property.  Thus, another method should be found to show that these elements are demipotent.

An important quotient of the zero-Hecke monoid is the monoid of \emph{Non-Decreasing Parking Functions}, $NDPF_N$.  These are the functions $f: [N] \rightarrow [N]$ satisfying
\begin{itemize}
\item $f(i)\leq i$, and 
\item For any $i\leq j$, then $f(i)\leq f(j)$.
\end{itemize}
This monoid can be obtained from $H_0(S_N)$ by introducing the additional relation:
\[
\pi_i\pi_{i+1}\pi_i = \pi_i\pi_{i+1}.
\]
The lattice of idempotents of the monoid $NDPF_N$ is identical to the lattice of idempotents in $H_0(S_N)$.  We have shown that every masked word $u_N^D$ is idempotent in the algebra of $NDPF_N$, supporting Conjecture~\ref{conj:universality}.  For the full exploration of $NDPF_N$, including the proof of the claim that $u_N^D$ is idempotent in $\mathbb{C}NDPF_N$, see \cite{Denton_Hivert_Schilling_Thiery.JTrivialMonoids}.

\subsection{Direct Description of the Idempotents}

A number of questions remain concerning the idempotents we have constructed.

First, uniqueness of the idempotents described in this paper is unknown.  In fact, there are many families of orthogonal idempotents in $H_0(S_N)$.  The idempotents we have constructed are invariant as a set under the automorphism $\Psi$, and compatible with the branching from $S_{N-1}$ to $S_N$ according to the choice of orientation of the Dynkin diagram.  

Second, computer exploration has shown that, over the complex numbers, the idempotents obtained from the diagram demipotents have $\pm 1$ coefficients.  This phenomenon has been observed up to $N=9$.  This seems to be peculiar to the construction we have presented, as we have found other idempotents that do not have this property.  It would be interesting to have an even more direct construction of the idempotents, such as a rule for directly determining the coefficients of each idempotent.

It should be noted that a general `lifting' construction has long been known, which constructs orthogonal idempotents in the algebra.  (See~\cite[Chapter 77]{curtis_reiner.1962})  A particular implementation of this lifting construction for algebras of $\mathcal{J}$-trivial monoids is given in [DHST, FORTHCOMING].  This lifting construction starts with the idempotents in the monoid, which in the semisimple quotient have the multiplicative structure of a lattice.  In the case of a zero-Hecke algebra with index set $I$, these idempotents are just the long elements $w_J^+$, for any $J\subset I$.  Then the multiplication rule in the semisimple quotient for two such idempotents $w_J^+$, $w_K^+$ is just $w_K^+w_J^+=w_{J\cup K}^+$.  Each idempotent in the semisimple quotient is in turn lifted to an idempotent in the algebra, and forced to be orthogonal to all idempotents previously lifted.  Many sets of orthogonal idempotents can be thus obtained, but the process affords little understanding of the combinatorics of the underlying monoid.  

The $\pm 1$ coefficients that have been observed in the idempotents thus far constructed suggest that there are still interesting combinatorics to be learned from this problem.

\subsection{Generalization to Other Types}

A combinatorial construction for idempotents in the zero-Hecke algebra for general Coxeter groups would be desirable.  It is simple to construct idempotents for any rank 2 Dynkin diagram.  The author has also constructed idempotents for type $B_3$ and $D_4$, but has not been able to find a satisfactory formula for general type $B_N$ or $D_N$.  

A major obstruction to the direct application of our construction to other types arises from our expressions for the longest elements in type $A_N$.  For the index set $J \cup \{k\} \subset I$, where $k$ is larger (or smaller) than any index in $J$ we have expressed the longest element for $J\cup \{\pi_k\}$ as $w_J^+\pi_k w_J^+$.  This expression contains only a single $\pi_k$.  In every other type, expressions for the longest element generally require at least two of any generator corresponding to a leaf of the Dynkin diagram.  This creates an obstruction to branching demipotents in the way we have described for type $A_N$.

For example, in type $D_4$, a reduced expression for the longest element is $\pi_{423124123121}$.  The generators corresponding to leaves in the Dynkin diagram are $\pi_1$, $\pi_3$, and $\pi_4$, all of which appear at least twice in this expression.  (In fact, this is true for any of the $2316$ reduced words for the longest element in $D_4$.)  Ideally, to branch easily from type $A_3$, we would be able to write the long element in the form $w_J^+\pi_4w_J^+$, where $4 \not \in J$, but this is clearly not possible.

\bibliographystyle{abbrvnat}
\bibliography{tom-fpsac}
\label{sec:biblio}

\end{document}